\documentclass{amsart}
\usepackage{amsthm, amssymb, amsmath}
\usepackage{amsfonts}  
\usepackage[english]{babel} 
\usepackage{graphicx, tikz}
\usepackage{subfigure}
\usepackage{fullpage}
\usepackage{url} 
\usepackage{gnuplot-lua-tikz}
\newtheorem{prop}{Proposition}[section] 
\newtheorem{conj}{Conjecture}[section]
\newtheorem{theorem}{Theorem}[section]
\newtheorem{lemma}{Lemma}[section]

\newtheorem{remark}{Remark}[section]
\theoremstyle{definition}

\newcommand{\F}{\mathcal F}

\newcommand{\N}{\mathbb N}
\newcommand{\Q}{\mathbb Q}
\newcommand{\Z}{\mathbb Z}

\def\supp{\text{supp}}
\begin{document}
 
\title{Matching for generalised $\beta$-transformations}
\author{Henk Bruin, Carlo Carminati, Charlene Kalle}
\address[Henk Bruin]{Faculty of Mathematics, University of Vienna, Oskar Morgensternplatz 1, 1090 Vienna, Austria}
\email[Henk Bruin]{henk.bruin@univie.ac.at}
\address[Carlo Carminati]{Dipartimento di Matematica, Universit\`a di Pisa, Largo Bruno Pontecorvo 5, I-56127, Italy}
\email[Carlo Carminati]{carminat@dm.unipi.it}
\address[Charlene Kalle]{Mathematisch Instituut, Leiden University, Niels Bohrweg 1, 2333CA Leiden, The Netherlands}
\email[Charlene Kalle]{kallecccj@math.leidenuniv.nl}


\date{Version of \today}

\subjclass[2010]{37E10, 11R06, 37E05, 37E45, 37A45}
\keywords{$\beta$-transformation, matching, interval map}

\maketitle
 
\begin{abstract}
We investigate matching for the family $T_\alpha(x) = \beta x + \alpha \pmod 1$, $\alpha \in [0,1]$, for fixed  $\beta > 1$.
Matching refers to the property that there is an $n \in \N$ such that $T_\alpha^n(0) = T_\alpha^n(1)$. We show that for various Pisot numbers $\beta$, matching occurs on an open dense set of $\alpha \in [0,1]$ and we compute the Hausdorff dimension of its complement. Numerical evidence shows more cases where matching is prevalent.
\end{abstract}

\section{Introduction}
When studying interval maps from the point of view of ergodic theory, the first task is to find a suitable invariant measure $\mu$, preferably one that is absolutely continuous with respect to the Lebesgue measure. If the map is piecewise linear it is easy to obtain such a measure if 
the map admits a Markov partition (see \cite{FB81}). If no Markov partition exists, the question becomes more delicate. A relatively simple expression for the invariant measure, however, can still be found if the system satisfies the so called {\em matching condition}. Matching occurs when the orbits of the left and right limits of the discontinuity (critical) points of the map meet after a number of iterations and if at that time also their (one-sided) derivatives are equal. Often results that hold for Markov maps are also valid in case the system has matching. For instance, matching causes the invariant density to be piecewise smooth or constant. For the family 
$T_\alpha : {\mathbb S}^1 \to {\mathbb S}^1$,
\begin{equation}\label{eq:T}
T_\alpha: x \mapsto \beta x + \alpha \pmod 1, \qquad \alpha \in [0,1), \, \beta \text{ fixed},
\end{equation}
formula (1.4) in \cite{FL96} states that
the $T_\alpha$-invariant density is
$$
h(x) := \frac{d\mu(x)}{dx} = 
\sum_{T^n_\alpha(0^-) < x} \beta^{-n} - \sum_{T^n_\alpha(0^+) < x} \beta^{-n}.
$$
Matching then immediately implies that $h$ is piecewise constant, but this result holds in much greater generality, see \cite{BCMP}. A further example can be found in \cite{KS12}, where a certain construction involving generalised $\beta$-transformations is proved to give a multiple tiling in case the generalised $\beta$-transformation has a Markov partition or matching.

\vskip .2cm
Especially in the case of $\alpha$-continued fraction maps, the concept of matching has proven to be very 
useful. In \cite{NN08,KSS12,CT12,CT13} matching was used to study the entropy as a function of $\alpha$. 
In \cite{DKS09} the authors used matching to give a description of the invariant density for the 
$\alpha$-Rosen continued fractions. The article \cite{BORG13} investigated the entropy and invariant 
measure for a certain two-parameter family of 
piecewise linear interval maps and conjectured about a relation between these and matching. In several parametrised families
where matching was studied, it turned out that matching occurs prevalently, i.e., for (in some sense) typical parameters. For example, in \cite{KSS12} it is shown that the set of $\alpha$'s for which the $\alpha$-continued fraction map has matching, has full Lebesgue measure. This fact is proved in \cite{CT12} as well, where it is also shown that the complement of this set has Hausdorff dimension $1$.

\vskip .2cm
For piecewise linear transformations, prevalent matching appears to be rare: for instance it can occur in the 
family \eqref{eq:T} only if the slope $\beta$ is an algebraic integer, see \eqref{q:0orbit}. It is likely that $\beta$ must 
satisfy even stronger requirements for matching to hold:  so far matching has only been observed for values 
of the slope $\beta$ that are Pisot numbers (i.e., algebraic numbers $\beta > 1$ that have all Galois 
conjugates strictly inside the unit disk) or Salem numbers (i.e., algebraic numbers $\beta > 1$ that have 
all Galois conjugates in the closed unit disk with at least one on the boundary). 
For instance if $\beta$ is the Salem number satisfying $\beta^4-\beta^3-\beta^2-\beta+1 = 0$ and if $\alpha \in \big(\frac1{\beta^4+1}, \frac{\beta}{\beta^4+1} \big)$, then the points 0 and 1 have the following orbits under $T_\alpha$:
\begin{equation}\label{eq:salem4} 
\begin{array}{ccccccc}
0 & \to & \alpha & \to & (\beta+1)\alpha & \to & (\beta^2 + \beta + 1)\alpha,\\
1 & \to & \beta + \alpha -1 & \to & (\beta+1)\alpha + \frac1{\beta}-\frac1{\beta^2} & \to & (\beta^2 + \beta + 1)\alpha - \frac1{\beta}.
\end{array}
\end{equation}
Lemma~\ref{l:1overbeta} below now implies that there is matching after four steps. For this choice of the slope numerical experiments suggests matching
is prevalent, 
but the underlying structure is not yet clear to us.

\vskip .2cm 
The object of study of this paper is the two-parameter family of circle maps from \eqref{eq:T}. These are called {\em generalised} or {\em shifted $\beta$-transformations} and they are well studied in the literature. It follows immediately from the results of Li and Yorke (\cite{LY78}) that these maps have a unique absolutely continuous invariant measure, that is ergodic. Hofbauer (\cite{Hof81b}) proved that this measure is the unique measure of maximal entropy with entropy $\log \beta$. Topological entropy is studied in \cite{FP08}. In \cite{FL96,FL97} Flatto and Lagarias studied the lap-counting function of this map. The relation with normal numbers is investigated in \cite{FP09,Sch97,Sch11}. More recently, Li, Sahlsten and Samuel studied the cases in which the corresponding $\beta$-shift is of finite type (\cite{LSS16}). 
 
\vskip .2cm
In this paper we are interested in the size of the set of $\alpha$'s for which, given a $\beta>1$, the map $T_{\alpha}: x \mapsto \beta x + \alpha \pmod 1$ has matching. We call this set $A_{\beta}$, i.e.,
\begin{equation}\label{q:A}
A_{\beta} = \{ \alpha \in [0,1]\, : \, T_{\alpha} \text{ does not have matching} \}.
\end{equation}
The first main result pertains to quadratic irrationals:
\begin{theorem}\label{t:quadratic}
 Let $\beta$ be a quadratic algebraic number. Then $T_\alpha$ exhibits matching if and only if
 $\beta$ is Pisot. In this case $\beta^2 -k\beta \pm d = 0$ for some $d \in \N$ and
 $k > d\pm 1$, and  $dim_H(A_\beta) = \frac{\log d}{\log \beta}$.
\end{theorem}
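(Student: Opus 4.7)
My approach is to track the single scalar $D_n(\alpha) := T_\alpha^n(1) - T_\alpha^n(0)$. Since $T_\alpha(x) = \beta x + \alpha - \lfloor \beta x + \alpha \rfloor$, the sequence obeys $D_{n+1} = \beta D_n - e_n$ with $D_0 = 1$ and $e_n := \lfloor \beta T_\alpha^n(1) + \alpha \rfloor - \lfloor \beta T_\alpha^n(0) + \alpha \rfloor \in \Z$, so matching at step $n$ is equivalent to $\beta^n = \sum_{j<n} e_j \beta^{n-1-j}$; in particular $\beta$ must be an algebraic integer, as already noted in the introduction. When $\beta$ is quadratic with minimal polynomial $p(x) = x^2 - kx \pm d$, every $D_n$ lies in the rank-two module $\Z[\beta]$, so we can uniquely write $D_n = a_n \beta + b_n$ with $(a_n, b_n) \in \Z^2$; the recurrence becomes $a_{n+1} = k a_n + b_n$ and $b_{n+1} = \mp d\, a_n - e_n$, starting from $(a_0, b_0) = (0, 1)$, and matching now reads $(a_n, b_n) = (0, 0)$.

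\textbf{Only-if direction.} Apply Galois conjugation: the sequence $D_n' := a_n \beta' + b_n$ satisfies the parallel recurrence $D_{n+1}' = \beta' D_n' - e_n$ with the \emph{same} $e_n$'s and $D_0' = 1$. Since the norm $D_n D_n' = N(D_n)$ lies in $\Z$, matching forces $D_n' = 0$ simultaneously. If $\beta$ is not Pisot, so $|\beta'| \ge 1$, the conjugate map $x \mapsto \beta' x - e_n$ is non-contracting while the admissible integers $e_n$ are tightly restricted by the dynamical constraint $D_n \in (-1,1)$; combining these should preclude the conjugate orbit $(D_n')$ from ever reaching $0$ from the initial value $1$. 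Converting this intuitive obstruction into a quantitative exclusion, by tracking precisely which pairs $(D_n, D_n') \in \Z[\beta] \oplus \Z[\beta']$ are reachable under the admissible $(e_n)$, is in my view the principal difficulty of the theorem.

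\textbf{If direction and Hausdorff dimension.} For Pisot quadratic $\beta$ matching should be prevalent, and one must compute $\dim_H A_\beta$. Because $D_n(\alpha)$ is piecewise constant in $\alpha$ (the shared $\alpha$-slopes of $T_\alpha^n(0)$ and $T_\alpha^n(1)$ cancel), the parameter space splits into finitely many intervals on which the orbit combinatorics is fixed. I would construct a renormalisation operator $R : [0,1] \to [0,1]$ encoding one matching attempt per application: on one sub-interval $R$ realises matching, while on exactly $d$ other sub-intervals $R$ postpones matching and restarts the process, the count $d$ being dictated by the constant term of $p(x)$. If $R$ is affine of slope $\beta$ on each sub-interval, then $A_\beta$ is the attractor of the inverse IFS of $d$ contractions of ratio $1/\beta$, and provided the open set condition holds, Moran's formula yields $\dim_H A_\beta = \log d / \log \beta$. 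The technical work here is the explicit construction of $R$, the verification of the open set condition (or a suitable Markov structure), and the check that every $\alpha \notin A_\beta$ eventually lands in the matching branch — each step relying on the Pisot property $|\beta'| < 1$ to keep the conjugate dynamics under control.
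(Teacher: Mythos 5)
Your reduction to the integer pair $(a_n,b_n)$ and the recurrence $D_{n+1}=\beta D_n-e_n$ is sound, but both halves of the argument stop short of a proof, and in the only-if direction you have chosen a route that is genuinely harder than what is needed. The paper never analyses the conjugate orbit $(D_n')$. The observation you are missing is Lemma~\ref{l:1overbeta}: matching at step $n+1$ occurs precisely when $|D_n|=|\ell|/\beta$ for some integer $\ell$, necessarily with $|\ell|<\beta$ because $|D_n|<1$. Writing $D_n=n\beta+m$ with $n,m\in\Z$ (and $n\neq 0$, else $\beta\in\Q$ or $D_n=0$ already), this gives $n\beta^2+m\beta-\ell=0$, which must be an integer multiple of the minimal polynomial $x^2-kx\pm d$; hence $|\ell|=|n|d$ and so $d\le|\ell|<\beta$, and $d<\beta$ is, via the quadratic formula, exactly the Pisot condition $k>d\pm 1$. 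This one-step algebraic argument replaces the entire ``quantitative exclusion'' of reachable pairs $(D_n,D_n')$ that you yourself flag as the principal difficulty; as written that part of your proposal is an unproved assertion, and it is not clear it can be pushed through, since the admissible $e_n$ are constrained by the real orbit rather than the conjugate one, so $|\beta'|\ge 1$ alone does not obviously prevent $D_n'$ from reaching $0$.

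For the dimension, your heuristic ($d$ branches, expansion $\beta$, hence $\log d/\log\beta$) gives the right answer, but the exact self-similar IFS on parameter space that you posit does not exist as stated. The paper works first in phase space: it builds a degree-$d$ circle endomorphism $g_\alpha$ with $d$ plateaus (the ``forbidden regions'' whose visit forces matching at the next step), shows the survivor set $X_\alpha$ has dimension $\log d/\log\beta$, and only then transfers to the parameter by a transversality argument: by Remark~\ref{rem:pw_affine} the relevant cylinder boundaries move to the left as $\alpha$ increases while $T_\alpha(0^+)=\alpha$ moves to the right with speed $1$, so parameter windows and phase-space cylinders have comparable lengths. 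Even in phase space the cylinders associated to symbolic words are not single intervals but unions of up to $n$ intervals (this is the content of the paper's inner lemma), so Moran's formula with the open set condition cannot be invoked directly; one needs the Lipschitz coding argument instead. Finally, your sketch ignores the case $\beta^2-k\beta-d=0$, where the two critical orbits swap order at every non-matching step and one must pass to the second iterate (a degree-$d^2$ map of slope $\beta^2$) to obtain the same dimension $\log d^2/\log\beta^2=\log d/\log\beta$; and it exhibits no explicit matching parameter for the ``if'' direction, which the paper gets for free from the branch count (e.g.\ $\alpha\ge k-\beta$ gives matching at the second iterate in the $+d$ case).
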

This result is proved in Section~\ref{sec:nonunit}. The methods for $+d$ and $-d$ are quite similar with the second 
family being a bit more difficult than the first.  
Section~\ref{s:minusd} includes the values of $\beta$ satifying $\beta^2-k\beta - 1$, which are sometimes 
called the metallic means. 

In Section~\ref{sec:multinacci} we make some observations about the multinacci Pisot numbers (defined as the leading root of $\beta^k - \beta^{k-1} - \dots - \beta -1 = 0$). The second main result of this paper states that in the case $k = 3$ (the tribonacci number) the non-matching set has Hausdorff dimension strictly between 0 and 1. Results on the Hausdorff dimension or Lebesgue measure for $k \ge 4$ however remain illusive.

\section{Numerical evidence}\label{s:numerical}
We have numerical evidence of prevalence of matching in the family \eqref{eq:T} for more values of $\beta$ then we have currently proofs for.
Values of the slope $\beta$ for which matching seems to be prevalent include many Pisot numbers such as the multinacci numbers and the plastic constant (root of  $x^3=x+1$). Let us mention that matching also occurs for some Salem numbers, including the famous Lehmer's constant (i.e., the root of the polynomial equation $x^{10}+x^9-x^7-x^6-x^5-x^4-x^3+x+1  = 0$,  conjecturally the smallest Salem number), even if it is still unclear whether matching is prevalent here. The same holds for the Salem number used in \eqref{eq:salem4}.

One can also use the numerical evidence to make a guess about the box dimension of the bifurcation set $A_{\beta}$ given in (\ref{q:A}). Indeed, if $A\subset [0,1]$ is a full measure open set, then the box dimension of $[0,1]\setminus A$ equals the abscissa of convergence $s^*$ of the series
$$ \Phi(s):=\sum_{J\in \mathcal{A}} |J|^s$$
where $ \mathcal{A}$ is the family of connected components of $A$. Let us fix a value $b>1$ and set $a_k:=\#\{J\in \mathcal{A} : -k-1\leq \log_b |J| < -k \}$ then $\Phi(s) \asymp \sum_k a_k b^{-sk}$ and hence $s^*=\limsup_{k\to \infty} \frac{1}{k}\log_b a_k$. This means that we can deduce information about the box dimension from a statistic of the sizes of matching intervals.
The parameter $b$ in the above construction can be chosen freely; popular choices are $b=2$, $b=e$ and $b=10$. However, sometimes a clever choice of $b$ can be the key to make the growth rate of the sequence $\log_b a_k$ apparent from its very first elements. 
Indeed, in the case of generalised $\beta$-transformations like in (\ref{eq:T}) a natural choice of the base is $b=\beta$, since it seems that if on an interval $J$ matching occurs in $k$ steps then for the size $|J|$ we have $|J| \leq c \beta^{-k}$ (see figure).
\begin{figure}[h]
\centering
\includegraphics[scale=0.45]{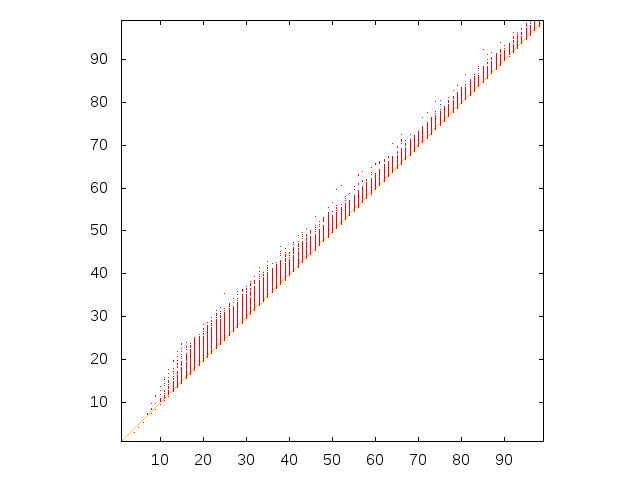}
\caption{This plot refers to the tribonacci value of $\beta$ and represents $-\log_\beta |J|$ (on the $y$-axis) against the matching index $k$ (on the $x$-axis): a linear dependence is quite apparent. 
}
\label{k-vs-size}
\end{figure}

What actually happens in these constant slope cases is that there are many matching intervals of the very same size, and these ``modal'' sizes form a sequence $s_n \sim \beta^{-n}$. 
This phenomenon is most evident when $\beta$ is a quadratic irrational; for instance in the case $\beta=2+\sqrt{2}$ all matching intervals seem to follow a very regular pattern, namely there is a decreasing sequence $s_n$ (the ``sizes'') such that:
\begin{enumerate}
\item[(i)] $-\frac{1}{n}\log_\beta s_n \to 1$  as $n\to \infty$ (note that here the log is in base $\beta=2+\sqrt{2}$);
\item[(ii)] if $J$ is a matching interval then $|J|=s_n$ for some $n$;
\item[(iii)] calling $a_n$ the cardinality of the matching intervals of size $s_n$
one gets the sequence1, 2, 6, 12, 30, 54, 126, 240, 504, 990, 2046, 4020, 8190, $\ldots$
which turns out to be known as A038199 on OEIS (see \cite{OEIS}). 
Moreover $\frac{1}{n}\log (a_n) \to 2$ as $n \to \infty$.
\end{enumerate}
   
\begin{figure}[h]
\centering
\begin{tikzpicture}[gnuplot]
\gpsolidlines
\path (0.000,0.000) rectangle (12.700,10.160);
\gpcolor{color=gp lt color border}
\gpsetlinetype{gp lt border}
\gpsetlinewidth{1.00}
\draw[gp path] (1.748,0.616)--(1.928,0.616);
\draw[gp path] (12.147,0.616)--(11.967,0.616);
\node[gp node right] at (1.564,0.616) { 1};
\draw[gp path] (1.748,1.168)--(1.838,1.168);
\draw[gp path] (12.147,1.168)--(12.057,1.168);
\draw[gp path] (1.748,1.899)--(1.838,1.899);
\draw[gp path] (12.147,1.899)--(12.057,1.899);
\draw[gp path] (1.748,2.273)--(1.838,2.273);
\draw[gp path] (12.147,2.273)--(12.057,2.273);
\draw[gp path] (1.748,2.451)--(1.928,2.451);
\draw[gp path] (12.147,2.451)--(11.967,2.451);
\node[gp node right] at (1.564,2.451) { 10};
\draw[gp path] (1.748,3.003)--(1.838,3.003);
\draw[gp path] (12.147,3.003)--(12.057,3.003);
\draw[gp path] (1.748,3.734)--(1.838,3.734);
\draw[gp path] (12.147,3.734)--(12.057,3.734);
\draw[gp path] (1.748,4.108)--(1.838,4.108);
\draw[gp path] (12.147,4.108)--(12.057,4.108);
\draw[gp path] (1.748,4.286)--(1.928,4.286);
\draw[gp path] (12.147,4.286)--(11.967,4.286);
\node[gp node right] at (1.564,4.286) { 100};
\draw[gp path] (1.748,4.838)--(1.838,4.838);
\draw[gp path] (12.147,4.838)--(12.057,4.838);
\draw[gp path] (1.748,5.569)--(1.838,5.569);
\draw[gp path] (12.147,5.569)--(12.057,5.569);
\draw[gp path] (1.748,5.943)--(1.838,5.943);
\draw[gp path] (12.147,5.943)--(12.057,5.943);
\draw[gp path] (1.748,6.121)--(1.928,6.121);
\draw[gp path] (12.147,6.121)--(11.967,6.121);
\node[gp node right] at (1.564,6.121) { 1000};
\draw[gp path] (1.748,6.673)--(1.838,6.673);
\draw[gp path] (12.147,6.673)--(12.057,6.673);
\draw[gp path] (1.748,7.404)--(1.838,7.404);
\draw[gp path] (12.147,7.404)--(12.057,7.404);
\draw[gp path] (1.748,7.778)--(1.838,7.778);
\draw[gp path] (12.147,7.778)--(12.057,7.778);
\draw[gp path] (1.748,7.956)--(1.928,7.956);
\draw[gp path] (12.147,7.956)--(11.967,7.956);
\node[gp node right] at (1.564,7.956) { 10000};
\draw[gp path] (1.748,8.508)--(1.838,8.508);
\draw[gp path] (12.147,8.508)--(12.057,8.508);
\draw[gp path] (1.748,9.239)--(1.838,9.239);
\draw[gp path] (12.147,9.239)--(12.057,9.239);
\draw[gp path] (1.748,9.613)--(1.838,9.613);
\draw[gp path] (12.147,9.613)--(12.057,9.613);
\draw[gp path] (1.748,9.791)--(1.928,9.791);
\draw[gp path] (12.147,9.791)--(11.967,9.791);
\node[gp node right] at (1.564,9.791) { 100000};
\draw[gp path] (1.748,0.616)--(1.748,0.796);
\draw[gp path] (1.748,9.791)--(1.748,9.611);
\node[gp node center] at (1.748,0.308) {-5};
\draw[gp path] (2.693,0.616)--(2.693,0.796);
\draw[gp path] (2.693,9.791)--(2.693,9.611);
\node[gp node center] at (2.693,0.308) { 0};
\draw[gp path] (3.639,0.616)--(3.639,0.796);
\draw[gp path] (3.639,9.791)--(3.639,9.611);
\node[gp node center] at (3.639,0.308) { 5};
\draw[gp path] (4.584,0.616)--(4.584,0.796);
\draw[gp path] (4.584,9.791)--(4.584,9.611);
\node[gp node center] at (4.584,0.308) { 10};
\draw[gp path] (5.529,0.616)--(5.529,0.796);
\draw[gp path] (5.529,9.791)--(5.529,9.611);
\node[gp node center] at (5.529,0.308) { 15};
\draw[gp path] (6.475,0.616)--(6.475,0.796);
\draw[gp path] (6.475,9.791)--(6.475,9.611);
\node[gp node center] at (6.475,0.308) { 20};
\draw[gp path] (7.420,0.616)--(7.420,0.796);
\draw[gp path] (7.420,9.791)--(7.420,9.611);
\node[gp node center] at (7.420,0.308) { 25};
\draw[gp path] (8.366,0.616)--(8.366,0.796);
\draw[gp path] (8.366,9.791)--(8.366,9.611);
\node[gp node center] at (8.366,0.308) { 30};
\draw[gp path] (9.311,0.616)--(9.311,0.796);
\draw[gp path] (9.311,9.791)--(9.311,9.611);
\node[gp node center] at (9.311,0.308) { 35};
\draw[gp path] (10.256,0.616)--(10.256,0.796);
\draw[gp path] (10.256,9.791)--(10.256,9.611);
\node[gp node center] at (10.256,0.308) { 40};
\draw[gp path] (11.202,0.616)--(11.202,0.796);
\draw[gp path] (11.202,9.791)--(11.202,9.611);
\node[gp node center] at (11.202,0.308) { 45};
\draw[gp path] (12.147,0.616)--(12.147,0.796);
\draw[gp path] (12.147,9.791)--(12.147,9.611);
\node[gp node center] at (12.147,0.308) { 50};
\draw[gp path] (1.748,9.791)--(1.748,0.616)--(12.147,0.616)--(12.147,9.791)--cycle;
\gpcolor{color=gp lt color 1}
\gpsetlinetype{gp lt plot 1}
\draw[gp path] (3.058,0.616)--(3.058,1.168);
\draw[gp path] (3.257,0.616)--(3.257,2.044);
\draw[gp path] (3.449,0.616)--(3.449,2.596);
\draw[gp path] (3.638,0.616)--(3.638,3.327);
\draw[gp path] (3.828,0.616)--(3.828,3.795);
\draw[gp path] (4.017,0.616)--(4.017,4.470);
\draw[gp path] (4.206,0.616)--(4.206,4.984);
\draw[gp path] (4.395,0.616)--(4.395,5.575);
\draw[gp path] (4.584,0.616)--(4.584,6.113);
\draw[gp path] (4.773,0.616)--(4.773,6.692);
\draw[gp path] (4.962,0.616)--(4.962,7.230);
\draw[gp path] (5.151,0.616)--(5.151,7.797);
\draw[gp path] (5.340,0.616)--(5.340,8.252);
\draw[gp path] (5.529,0.616)--(5.529,8.447);
\draw[gp path] (5.719,0.616)--(5.719,8.146);
\draw[gp path] (5.908,0.616)--(5.908,7.817);
\draw[gp path] (6.097,0.616)--(6.097,7.442);
\draw[gp path] (6.286,0.616)--(6.286,7.067);
\draw[gp path] (6.475,0.616)--(6.475,6.659);
\draw[gp path] (6.664,0.616)--(6.664,6.254);
\draw[gp path] (6.853,0.616)--(6.853,5.891);
\draw[gp path] (7.042,0.616)--(7.042,5.592);
\draw[gp path] (7.231,0.616)--(7.231,5.205);
\draw[gp path] (7.420,0.616)--(7.420,4.793);
\draw[gp path] (7.609,0.616)--(7.609,4.513);
\draw[gp path] (7.798,0.616)--(7.798,4.184);
\draw[gp path] (7.987,0.616)--(7.987,4.024);
\draw[gp path] (8.176,0.616)--(8.176,3.701);
\draw[gp path] (8.366,0.616)--(8.366,3.353);
\draw[gp path] (8.555,0.616)--(8.555,2.874);
\draw[gp path] (8.744,0.616)--(8.744,2.596);
\draw[gp path] (8.933,0.616)--(8.933,2.451);
\draw[gp path] (9.122,0.616)--(9.122,1.899);
\draw[gp path] (9.311,0.616)--(9.311,2.451);
\draw[gp path] (9.500,0.616)--(9.500,2.044);
\draw[gp path] (9.689,0.616)--(9.689,1.168);
\draw[gp path] (9.878,0.616)--(9.878,1.899);
\draw[gp path] (10.256,0.616)--(10.256,1.168);
\gpsetpointsize{4.00}
\gppoint{gp mark 10}{(2.587,0.616)}
\gppoint{gp mark 10}{(2.829,0.616)}
\gppoint{gp mark 10}{(3.058,1.168)}
\gppoint{gp mark 10}{(3.257,2.044)}
\gppoint{gp mark 10}{(3.449,2.596)}
\gppoint{gp mark 10}{(3.638,3.327)}
\gppoint{gp mark 10}{(3.828,3.795)}
\gppoint{gp mark 10}{(4.017,4.470)}
\gppoint{gp mark 10}{(4.206,4.984)}
\gppoint{gp mark 10}{(4.395,5.575)}
\gppoint{gp mark 10}{(4.584,6.113)}
\gppoint{gp mark 10}{(4.773,6.692)}
\gppoint{gp mark 10}{(4.962,7.230)}
\gppoint{gp mark 10}{(5.151,7.797)}
\gppoint{gp mark 10}{(5.340,8.252)}
\gppoint{gp mark 10}{(5.529,8.447)}
\gppoint{gp mark 10}{(5.719,8.146)}
\gppoint{gp mark 10}{(5.908,7.817)}
\gppoint{gp mark 10}{(6.097,7.442)}
\gppoint{gp mark 10}{(6.286,7.067)}
\gppoint{gp mark 10}{(6.475,6.659)}
\gppoint{gp mark 10}{(6.664,6.254)}
\gppoint{gp mark 10}{(6.853,5.891)}
\gppoint{gp mark 10}{(7.042,5.592)}
\gppoint{gp mark 10}{(7.231,5.205)}
\gppoint{gp mark 10}{(7.420,4.793)}
\gppoint{gp mark 10}{(7.609,4.513)}
\gppoint{gp mark 10}{(7.798,4.184)}
\gppoint{gp mark 10}{(7.987,4.024)}
\gppoint{gp mark 10}{(8.176,3.701)}
\gppoint{gp mark 10}{(8.366,3.353)}
\gppoint{gp mark 10}{(8.555,2.874)}
\gppoint{gp mark 10}{(8.744,2.596)}
\gppoint{gp mark 10}{(8.933,2.451)}
\gppoint{gp mark 10}{(9.122,1.899)}
\gppoint{gp mark 10}{(9.311,2.451)}
\gppoint{gp mark 10}{(9.500,2.044)}
\gppoint{gp mark 10}{(9.689,1.168)}
\gppoint{gp mark 10}{(9.878,1.899)}
\gppoint{gp mark 10}{(10.067,0.616)}
\gppoint{gp mark 10}{(10.256,1.168)}
\gppoint{gp mark 10}{(10.445,0.616)}
\gppoint{gp mark 10}{(10.823,0.616)}
\gppoint{gp mark 10}{(11.391,0.616)}
\gppoint{gp mark 10}{(11.580,0.616)}
\gpcolor{color=gp lt color border}
\node[gp node center] at (2.587,0.800) {\tiny  1};
\node[gp node center] at (2.829,0.800) {\tiny 1};
\node[gp node center] at (3.058,1.352) {\tiny 2};
\node[gp node center] at (3.257,2.228) {\tiny 6};
\node[gp node center] at (3.449,2.780) {\tiny 12};
\node[gp node center] at (3.638,3.511) {\tiny 30};
\node[gp node center] at (3.828,3.979) {\tiny 54};
\node[gp node center] at (4.017,4.654) {\tiny 126};
\node[gp node center] at (4.206,5.168) {\tiny 240};
\node[gp node center] at (4.395,5.759) {\tiny 504};
\node[gp node center] at (4.584,6.297) {\tiny 990};
\node[gp node center] at (4.773,6.876) {\tiny 2046};
\node[gp node center] at (4.962,7.414) {\tiny 4020};
\node[gp node center] at (5.151,7.981) {\tiny 8190};
\node[gp node center] at (5.340,8.436) {\tiny 14496};
\node[gp node center] at (5.529,8.631) {\tiny 18514};
\node[gp node center] at (5.719,8.330) {\tiny 12698};
\node[gp node center] at (5.908,8.001) {\tiny 8395};
\node[gp node center] at (6.097,7.626) {\tiny 5245};
\node[gp node center] at (6.286,7.251) {\tiny 3279};
\node[gp node center] at (6.475,6.843) {\tiny 1965};
\node[gp node center] at (6.664,6.438) {\tiny 1182};
\node[gp node center] at (6.853,6.075) {\tiny 749};
\node[gp node center] at (7.042,5.776) {\tiny 515};
\node[gp node center] at (7.231,5.389) {\tiny 317};
\node[gp node center] at (7.420,4.977) {\tiny 189};
\node[gp node center] at (7.609,4.697) {\tiny 133};
\node[gp node center] at (7.798,4.368) {\tiny 88};
\node[gp node center] at (7.987,4.208) {\tiny 72};
\node[gp node center] at (8.176,3.885) {\tiny 48};
\node[gp node center] at (8.366,3.537) {\tiny 31};
\node[gp node center] at (8.555,3.058) {\tiny 17};
\node[gp node center] at (8.744,2.780) {\tiny 12};
\node[gp node center] at (8.933,2.635) {\tiny 10};
\node[gp node center] at (9.122,2.083) {\tiny 5};
\node[gp node center] at (9.311,2.635) {\tiny 10};
\node[gp node center] at (9.500,2.228) {\tiny 6};
\node[gp node center] at (9.689,1.352) {\tiny 2};
\node[gp node center] at (9.878,2.083) {\tiny 5};
\node[gp node center] at (10.067,0.800) {\tiny 1};
\node[gp node center] at (10.256,1.352) {\tiny 2};
\node[gp node center] at (10.445,0.800) {\tiny 1};
\node[gp node center] at (10.823,0.800) {\tiny 1};
\node[gp node center] at (11.391,0.800) {\tiny 1};
\node[gp node center] at (11.580,0.800) {\tiny 1};
\gpsetlinetype{gp lt border}
\draw[gp path] (1.748,9.791)--(1.748,0.616)--(12.147,0.616)--(12.147,9.791)--cycle;
\gpdefrectangularnode{gp plot 1}{\pgfpoint{1.748cm}{0.616cm}}{\pgfpoint{12.147cm}{9.791cm}}
\end{tikzpicture}

\caption{This plot shows the frequencies of each size: each vertical bar is placed
on values $-\log_\beta s_n$, the height of each bar represents the frequency $ a_n$ and is displayed on a logarithmic scale. On
the top of each bar we have recorded the value of $a_n$: these values match perfectly with the sequence A038199 up to the 14th element (after this threshold the numerical data are likely to be incomplete). The clear decaying behaviour which becomes 
apparent after $n=15$  might also relate to the fact that, since the box dimension of the bifurcation set is smaller than one, the probability that a point taken randomly in parameter space falls in some matching interval of size  of order $\beta^{-n}$ decays exponentially as $n\to \infty$.
}
\label{frequencies}
\end{figure}
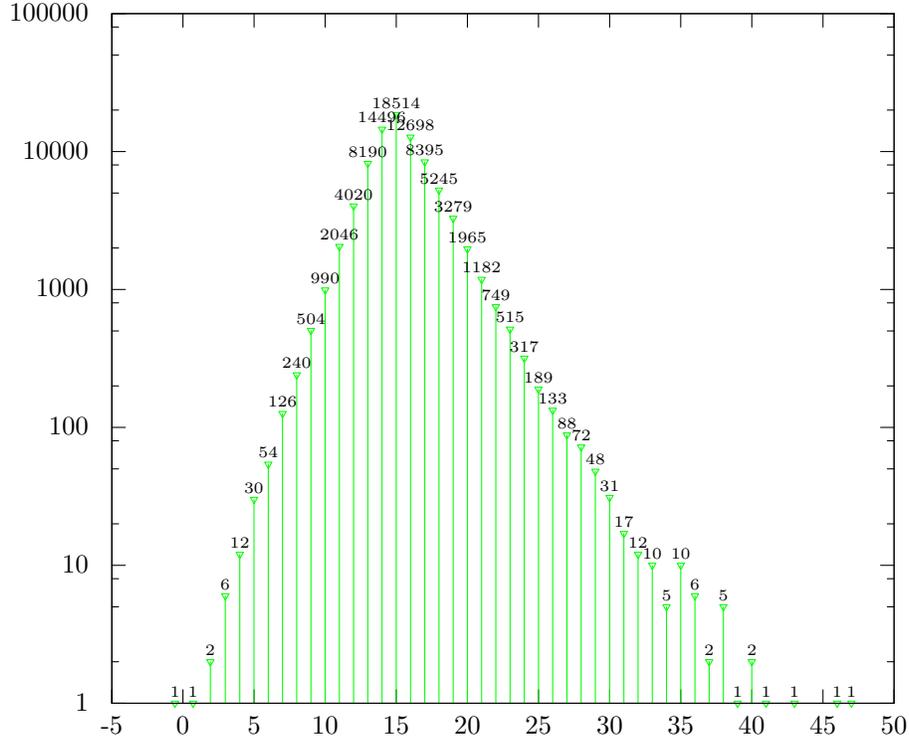

Thus one is led to conclude that the box dimension in this case is
\[ \lim_{n \to \infty}-\frac{1}{n}\log_\beta( a_n )  =  \log(2)/\log(2+\sqrt{2})=0.5644763825...
\]
This case is actually covered by  Theorem~\ref{t:quadratic}, which shows with a different argument  that for $\beta=2+\sqrt{2}$ one indeed gets that $dim_H(A_\beta) = \frac{\log 2}{\log (2+\sqrt{2})}$.

\begin{remark}\label{rm:totient}
{\rm 
For $\beta = \frac{1+\sqrt{5}}{2}$ and $s_n \approx \beta^{-n}$, we find that $a_n = \phi(n)$ is Euler's totient function. 
In Section~\ref{sec:nonunit} we relate this case to degree one circle maps $g_\alpha$ with plateaus, 
and matching with rotation numbers $\rho = \frac{m}{n} \in \Q$. This occurs for a parameter interval of length $\approx \beta^{-n}$, and the number of integers $m < n$ such that $\frac{m}{n}$ is in lowest terms is Euler's totient function. Naturally, the numerics indicate that $\dim_B(A_\beta) = 0$.
}\end{remark}

\begin{figure}[h]
\centering

\begin{tikzpicture}[gnuplot]
\gpsolidlines
\path (0.000,0.000) rectangle (12.700,5.080);
\gpcolor{color=gp lt color border}
\gpsetlinetype{gp lt border}
\gpsetlinewidth{1.00}
\draw[gp path] (1.564,0.616)--(1.744,0.616);
\draw[gp path] (12.147,0.616)--(11.967,0.616);
\node[gp node right] at (1.380,0.616) { 1};
\draw[gp path] (1.564,0.924)--(1.654,0.924);
\draw[gp path] (12.147,0.924)--(12.057,0.924);
\draw[gp path] (1.564,1.104)--(1.654,1.104);
\draw[gp path] (12.147,1.104)--(12.057,1.104);
\draw[gp path] (1.564,1.232)--(1.654,1.232);
\draw[gp path] (12.147,1.232)--(12.057,1.232);
\draw[gp path] (1.564,1.332)--(1.654,1.332);
\draw[gp path] (12.147,1.332)--(12.057,1.332);
\draw[gp path] (1.564,1.413)--(1.654,1.413);
\draw[gp path] (12.147,1.413)--(12.057,1.413);
\draw[gp path] (1.564,1.481)--(1.654,1.481);
\draw[gp path] (12.147,1.481)--(12.057,1.481);
\draw[gp path] (1.564,1.541)--(1.654,1.541);
\draw[gp path] (12.147,1.541)--(12.057,1.541);
\draw[gp path] (1.564,1.593)--(1.654,1.593);
\draw[gp path] (12.147,1.593)--(12.057,1.593);
\draw[gp path] (1.564,1.640)--(1.744,1.640);
\draw[gp path] (12.147,1.640)--(11.967,1.640);
\node[gp node right] at (1.380,1.640) { 10};
\draw[gp path] (1.564,1.948)--(1.654,1.948);
\draw[gp path] (12.147,1.948)--(12.057,1.948);
\draw[gp path] (1.564,2.128)--(1.654,2.128);
\draw[gp path] (12.147,2.128)--(12.057,2.128);
\draw[gp path] (1.564,2.256)--(1.654,2.256);
\draw[gp path] (12.147,2.256)--(12.057,2.256);
\draw[gp path] (1.564,2.355)--(1.654,2.355);
\draw[gp path] (12.147,2.355)--(12.057,2.355);
\draw[gp path] (1.564,2.436)--(1.654,2.436);
\draw[gp path] (12.147,2.436)--(12.057,2.436);
\draw[gp path] (1.564,2.505)--(1.654,2.505);
\draw[gp path] (12.147,2.505)--(12.057,2.505);
\draw[gp path] (1.564,2.564)--(1.654,2.564);
\draw[gp path] (12.147,2.564)--(12.057,2.564);
\draw[gp path] (1.564,2.617)--(1.654,2.617);
\draw[gp path] (12.147,2.617)--(12.057,2.617);
\draw[gp path] (1.564,2.664)--(1.744,2.664);
\draw[gp path] (12.147,2.664)--(11.967,2.664);
\node[gp node right] at (1.380,2.664) { 100};
\draw[gp path] (1.564,2.972)--(1.654,2.972);
\draw[gp path] (12.147,2.972)--(12.057,2.972);
\draw[gp path] (1.564,3.152)--(1.654,3.152);
\draw[gp path] (12.147,3.152)--(12.057,3.152);
\draw[gp path] (1.564,3.280)--(1.654,3.280);
\draw[gp path] (12.147,3.280)--(12.057,3.280);
\draw[gp path] (1.564,3.379)--(1.654,3.379);
\draw[gp path] (12.147,3.379)--(12.057,3.379);
\draw[gp path] (1.564,3.460)--(1.654,3.460);
\draw[gp path] (12.147,3.460)--(12.057,3.460);
\draw[gp path] (1.564,3.529)--(1.654,3.529);
\draw[gp path] (12.147,3.529)--(12.057,3.529);
\draw[gp path] (1.564,3.588)--(1.654,3.588);
\draw[gp path] (12.147,3.588)--(12.057,3.588);
\draw[gp path] (1.564,3.640)--(1.654,3.640);
\draw[gp path] (12.147,3.640)--(12.057,3.640);
\draw[gp path] (1.564,3.687)--(1.744,3.687);
\draw[gp path] (12.147,3.687)--(11.967,3.687);
\node[gp node right] at (1.380,3.687) { 1000};
\draw[gp path] (1.564,3.995)--(1.654,3.995);
\draw[gp path] (12.147,3.995)--(12.057,3.995);
\draw[gp path] (1.564,4.176)--(1.654,4.176);
\draw[gp path] (12.147,4.176)--(12.057,4.176);
\draw[gp path] (1.564,4.304)--(1.654,4.304);
\draw[gp path] (12.147,4.304)--(12.057,4.304);
\draw[gp path] (1.564,4.403)--(1.654,4.403);
\draw[gp path] (12.147,4.403)--(12.057,4.403);
\draw[gp path] (1.564,4.484)--(1.654,4.484);
\draw[gp path] (12.147,4.484)--(12.057,4.484);
\draw[gp path] (1.564,4.552)--(1.654,4.552);
\draw[gp path] (12.147,4.552)--(12.057,4.552);
\draw[gp path] (1.564,4.612)--(1.654,4.612);
\draw[gp path] (12.147,4.612)--(12.057,4.612);
\draw[gp path] (1.564,4.664)--(1.654,4.664);
\draw[gp path] (12.147,4.664)--(12.057,4.664);
\draw[gp path] (1.564,4.711)--(1.744,4.711);
\draw[gp path] (12.147,4.711)--(11.967,4.711);
\node[gp node right] at (1.380,4.711) { 10000};
\draw[gp path] (1.564,0.616)--(1.564,0.796);
\draw[gp path] (1.564,4.711)--(1.564,4.531);
\node[gp node center] at (1.564,0.308) { 0};
\draw[gp path] (3.681,0.616)--(3.681,0.796);
\draw[gp path] (3.681,4.711)--(3.681,4.531);
\node[gp node center] at (3.681,0.308) { 10};
\draw[gp path] (5.797,0.616)--(5.797,0.796);
\draw[gp path] (5.797,4.711)--(5.797,4.531);
\node[gp node center] at (5.797,0.308) { 20};
\draw[gp path] (7.914,0.616)--(7.914,0.796);
\draw[gp path] (7.914,4.711)--(7.914,4.531);
\node[gp node center] at (7.914,0.308) { 30};
\draw[gp path] (10.030,0.616)--(10.030,0.796);
\draw[gp path] (10.030,4.711)--(10.030,4.531);
\node[gp node center] at (10.030,0.308) { 40};
\draw[gp path] (12.147,0.616)--(12.147,0.796);
\draw[gp path] (12.147,4.711)--(12.147,4.531);
\node[gp node center] at (12.147,0.308) { 50};
\draw[gp path] (1.564,4.711)--(1.564,0.616)--(12.147,0.616)--(12.147,4.711)--cycle;
\node[gp node right] at (10.679,4.377) {tribonacci};
\gpcolor{color=gp lt color 6}
\gpsetlinetype{gp lt plot 6}
\draw[gp path] (10.863,4.377)--(11.779,4.377);
\draw[gp path] (1.987,0.616)--(2.199,0.616)--(2.411,0.616)--(2.834,1.332)--(3.046,1.332)%
  --(3.257,1.413)--(3.469,1.901)--(3.681,2.081)--(3.892,2.465)--(4.104,2.581)--(4.316,2.748)%
  --(4.527,2.993)--(4.739,3.206)--(4.951,3.391)--(5.162,3.536)--(5.374,3.746)--(5.586,3.552)%
  --(5.797,3.509)--(6.009,3.441)--(6.221,3.425)--(6.432,3.345)--(6.644,3.324)--(6.855,3.246)%
  --(7.067,3.361)--(7.279,3.095)--(7.490,3.080)--(7.702,3.034)--(7.914,2.917)--(8.125,2.920)%
  --(8.337,2.826)--(8.549,2.983)--(8.760,2.770)--(8.972,2.702)--(9.184,2.726)--(9.395,2.636)%
  --(9.607,2.586)--(9.819,2.536)--(10.030,2.626)--(10.242,2.421)--(10.454,2.421)--(10.665,2.373)%
  --(10.877,2.390)--(11.089,2.113)--(11.300,2.267)--(11.512,2.429)--(11.724,2.209)--(11.935,2.113)%
  --(12.147,2.047);
\gpsetpointsize{4.00}
\gppoint{gp mark 7}{(1.987,0.616)}
\gppoint{gp mark 7}{(2.199,0.616)}
\gppoint{gp mark 7}{(2.411,0.616)}
\gppoint{gp mark 7}{(2.834,1.332)}
\gppoint{gp mark 7}{(3.046,1.332)}
\gppoint{gp mark 7}{(3.257,1.413)}
\gppoint{gp mark 7}{(3.469,1.901)}
\gppoint{gp mark 7}{(3.681,2.081)}
\gppoint{gp mark 7}{(3.892,2.465)}
\gppoint{gp mark 7}{(4.104,2.581)}
\gppoint{gp mark 7}{(4.316,2.748)}
\gppoint{gp mark 7}{(4.527,2.993)}
\gppoint{gp mark 7}{(4.739,3.206)}
\gppoint{gp mark 7}{(4.951,3.391)}
\gppoint{gp mark 7}{(5.162,3.536)}
\gppoint{gp mark 7}{(5.374,3.746)}
\gppoint{gp mark 7}{(5.586,3.552)}
\gppoint{gp mark 7}{(5.797,3.509)}
\gppoint{gp mark 7}{(6.009,3.441)}
\gppoint{gp mark 7}{(6.221,3.425)}
\gppoint{gp mark 7}{(6.432,3.345)}
\gppoint{gp mark 7}{(6.644,3.324)}
\gppoint{gp mark 7}{(6.855,3.246)}
\gppoint{gp mark 7}{(7.067,3.361)}
\gppoint{gp mark 7}{(7.279,3.095)}
\gppoint{gp mark 7}{(7.490,3.080)}
\gppoint{gp mark 7}{(7.702,3.034)}
\gppoint{gp mark 7}{(7.914,2.917)}
\gppoint{gp mark 7}{(8.125,2.920)}
\gppoint{gp mark 7}{(8.337,2.826)}
\gppoint{gp mark 7}{(8.549,2.983)}
\gppoint{gp mark 7}{(8.760,2.770)}
\gppoint{gp mark 7}{(8.972,2.702)}
\gppoint{gp mark 7}{(9.184,2.726)}
\gppoint{gp mark 7}{(9.395,2.636)}
\gppoint{gp mark 7}{(9.607,2.586)}
\gppoint{gp mark 7}{(9.819,2.536)}
\gppoint{gp mark 7}{(10.030,2.626)}
\gppoint{gp mark 7}{(10.242,2.421)}
\gppoint{gp mark 7}{(10.454,2.421)}
\gppoint{gp mark 7}{(10.665,2.373)}
\gppoint{gp mark 7}{(10.877,2.390)}
\gppoint{gp mark 7}{(11.089,2.113)}
\gppoint{gp mark 7}{(11.300,2.267)}
\gppoint{gp mark 7}{(11.512,2.429)}
\gppoint{gp mark 7}{(11.724,2.209)}
\gppoint{gp mark 7}{(11.935,2.113)}
\gppoint{gp mark 7}{(12.147,2.047)}
\gppoint{gp mark 7}{(11.321,4.377)}
\gpcolor{color=gp lt color border}
\node[gp node right] at (10.679,4.069) {tetrabonacci};
\gpcolor{color=gp lt color 2}
\gpsetlinetype{gp lt plot 2}
\draw[gp path] (10.863,4.069)--(11.779,4.069);
\draw[gp path] (2.199,0.924)--(2.411,1.332)--(2.622,1.232)--(3.046,1.232)--(3.257,2.184)%
  --(3.469,2.157)--(3.681,2.197)--(3.892,2.444)--(4.104,2.706)--(4.316,2.886)--(4.527,3.002)%
  --(4.739,3.271)--(4.951,3.594)--(5.162,3.789)--(5.374,3.829)--(5.586,3.944)--(5.797,4.106)%
  --(6.009,4.269)--(6.221,4.061)--(6.432,4.022)--(6.644,4.001)--(6.855,3.966)--(7.067,3.888)%
  --(7.279,3.843)--(7.490,3.789)--(7.702,3.716)--(7.914,3.656)--(8.125,3.609)--(8.337,3.607)%
  --(8.549,3.526)--(8.760,3.429)--(8.972,3.375)--(9.184,3.349)--(9.395,3.289)--(9.607,3.238)%
  --(9.819,3.318)--(10.030,3.175)--(10.242,3.120)--(10.454,3.045)--(10.665,2.985)--(10.877,2.985)%
  --(11.089,2.899)--(11.300,2.878)--(11.512,2.816)--(11.724,2.756)--(11.935,2.752)--(12.147,2.626);
\gppoint{gp mark 11}{(2.199,0.924)}
\gppoint{gp mark 11}{(2.411,1.332)}
\gppoint{gp mark 11}{(2.622,1.232)}
\gppoint{gp mark 11}{(3.046,1.232)}
\gppoint{gp mark 11}{(3.257,2.184)}
\gppoint{gp mark 11}{(3.469,2.157)}
\gppoint{gp mark 11}{(3.681,2.197)}
\gppoint{gp mark 11}{(3.892,2.444)}
\gppoint{gp mark 11}{(4.104,2.706)}
\gppoint{gp mark 11}{(4.316,2.886)}
\gppoint{gp mark 11}{(4.527,3.002)}
\gppoint{gp mark 11}{(4.739,3.271)}
\gppoint{gp mark 11}{(4.951,3.594)}
\gppoint{gp mark 11}{(5.162,3.789)}
\gppoint{gp mark 11}{(5.374,3.829)}
\gppoint{gp mark 11}{(5.586,3.944)}
\gppoint{gp mark 11}{(5.797,4.106)}
\gppoint{gp mark 11}{(6.009,4.269)}
\gppoint{gp mark 11}{(6.221,4.061)}
\gppoint{gp mark 11}{(6.432,4.022)}
\gppoint{gp mark 11}{(6.644,4.001)}
\gppoint{gp mark 11}{(6.855,3.966)}
\gppoint{gp mark 11}{(7.067,3.888)}
\gppoint{gp mark 11}{(7.279,3.843)}
\gppoint{gp mark 11}{(7.490,3.789)}
\gppoint{gp mark 11}{(7.702,3.716)}
\gppoint{gp mark 11}{(7.914,3.656)}
\gppoint{gp mark 11}{(8.125,3.609)}
\gppoint{gp mark 11}{(8.337,3.607)}
\gppoint{gp mark 11}{(8.549,3.526)}
\gppoint{gp mark 11}{(8.760,3.429)}
\gppoint{gp mark 11}{(8.972,3.375)}
\gppoint{gp mark 11}{(9.184,3.349)}
\gppoint{gp mark 11}{(9.395,3.289)}
\gppoint{gp mark 11}{(9.607,3.238)}
\gppoint{gp mark 11}{(9.819,3.318)}
\gppoint{gp mark 11}{(10.030,3.175)}
\gppoint{gp mark 11}{(10.242,3.120)}
\gppoint{gp mark 11}{(10.454,3.045)}
\gppoint{gp mark 11}{(10.665,2.985)}
\gppoint{gp mark 11}{(10.877,2.985)}
\gppoint{gp mark 11}{(11.089,2.899)}
\gppoint{gp mark 11}{(11.300,2.878)}
\gppoint{gp mark 11}{(11.512,2.816)}
\gppoint{gp mark 11}{(11.724,2.756)}
\gppoint{gp mark 11}{(11.935,2.752)}
\gppoint{gp mark 11}{(12.147,2.626)}
\gppoint{gp mark 11}{(11.321,4.069)}
\gpcolor{color=gp lt color border}
\gpsetlinetype{gp lt border}
\draw[gp path] (1.564,4.711)--(1.564,0.616)--(12.147,0.616)--(12.147,4.711)--cycle;
\gpdefrectangularnode{gp plot 1}{\pgfpoint{1.564cm}{0.616cm}}{\pgfpoint{12.147cm}{4.711cm}}
\end{tikzpicture}
\caption{ Here we show the frequencies of sizes of matching intervals in the cases when the slope $\beta$ is the tribonacci or the tetrabonacci number: the $y$-coordinate represents (on a logarithmic scale)  the number $ a_k$  of intervals of size of order $2^{-k}$ while the $x$-coordinate represents $k$. It is quite evident that in both cases the graph shows  that $a_k$ grows exponentially in the beginning, and the decaying behaviour that can be seen after $k\geq 20$ is due to the fact that in this range our list of interval of size of order $2^{-k}$ is not complete any more.} 

\label{quabo-tribo}
\end{figure}
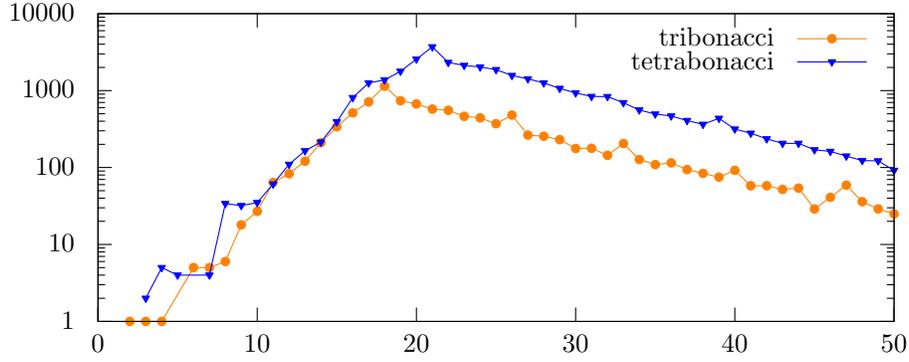

In summary, the following table gives values for the box-dimension of the bifurcation set $A_\beta$ that 
we obtained numerically. 
For the tribonacci number we prove in Section~\ref{sec:multinacci} that the Hausdorff dimension 
of $A_\beta$ is strictly between $0$ and $1$.
\[
\begin{array}{|c|r|l|}
\hline
\beta & \text{minimal polynomial} & \dim_B(A_\beta) \\[1mm]
\hline
\text{tribonacci} & \beta^3-\beta^2-\beta-1 = 0 & 0.66... \\[1mm]
\text{tetrabonacci} & \beta^4-\beta^3-\beta^2-\beta-1 = 0 & 0.76...\\[1mm]
\text{plastic} & \beta^3-\beta-1 = 0 & 0.93... \\[1mm]
\hline
\end{array}
\]

\section{The $\beta x + \alpha \pmod 1$ transformation}
For $\beta >1$ and $\alpha \in [0,1]$, the $\beta x+ \alpha$ (mod 1)-transformation is the map on ${\mathbb S}^1 = \mathbb R \slash \mathbb Z$ given by $x \mapsto \beta x + \alpha$ (mod 1). In what follows we will always assume that $\beta$ is given and we consider the family of maps $\{ T_{\alpha}: {\mathbb S}^1 \to {\mathbb S}^1 \}_{\alpha \in [0,1]}$ defined by $T_{\alpha} (x) = \beta x + \alpha$ (mod 1). The {\em critical orbits} of the map $T_{\alpha}$ are the orbits of $0^+ = \lim_{x \downarrow 0} x$ and $0^- = \lim_{x \uparrow 1}x$, i.e., the sets $\{ T_{\alpha}^n (0^+) \}_{n \ge 0}$ and $\{ T_{\alpha}^n (0^-) \}_{n \ge 0}$. For each combination of $\beta$ and $\alpha$, there is a largest integer $k\ge 0$, such that $\frac{k-\alpha}{\beta} < 1$. 
This means that for each $n \ge 1$ there are integers $a_i, b_i \in \{0, 1, \ldots, k+1\}$ such that
\begin{equation}\label{q:0orbit}
\begin{array}{rcl}
T_{\alpha}^n (0^+) &=& (\beta^{n-1} + \cdots + 1)\alpha - a_1\beta^{n-2} - \cdots - a_{n-2}\beta-a_{n-1}, \\
T_{\alpha}^n(0^-) &=& (\beta^{n-1}+\cdots + 1)\alpha + \beta^n-b_1\beta^{n-1} - \cdots - b_{n-1}\beta - b_n.
\end{array}
\end{equation}
The map $T_{\alpha}$ has a {\em Markov partition} if there exists a finite number of disjoint intervals $I_j \subseteq [0,1]$, $1\le j \le n$, such that
\begin{itemize}
\item $\overline{\bigcup_{j=1}^n I_j} = [0,1]$ and
\item for each $1 \le j,k \le n$ either $I_j \subseteq T_{\alpha}(I_k)$ or $I_j \cap T_{\alpha} (I_k) = \emptyset$.
\end{itemize}
This happens if and only if the orbits of $0^+$ and $0^-$ are finite. In particular there need to be integers $n,m$ such that $T_{\alpha}^n(0^+)=T_{\alpha}^m(0^+)$, which by the above 
implies that $\alpha \in \mathbb Q(\beta)$. If $\beta$ is an algebraic integer, then this is a countable set. It happens more frequently that $T_{\alpha}$ has matching. We say that the map $T_{\alpha}$ has {\em matching} 
if there is an $m \ge 1$ such that $T^m_{\alpha} (0^+) = T^m_{\alpha} (0^-)$. This implies the existence of an $m$ such that
\[ \beta^m - b_1\beta^{m-1} - (b_2-a_1)\beta^{m-2} - \cdots - (b_{m-1}-a_{m-2})\beta - (b_m-a_{m-1}) =0,\]
which means that $\beta$ is an algebraic integer. 

\begin{remark}{\rm
When defining matching for piecewise linear maps, one usually also asks for the derivative 
of $T^m_{\alpha}$ to be equal in the points $0^+$ and $0^-$. Since the maps $T_{\alpha}$ 
have constant slope, this condition is automatically satisfied in our case by stipulating 
that the number of iterates at matching is the same for $0^+$ and $0^-$.
}\end{remark}

Figure~\ref{f:variousalpha} illustrates that having a Markov partition does not exclude having 
matching and vice versa.

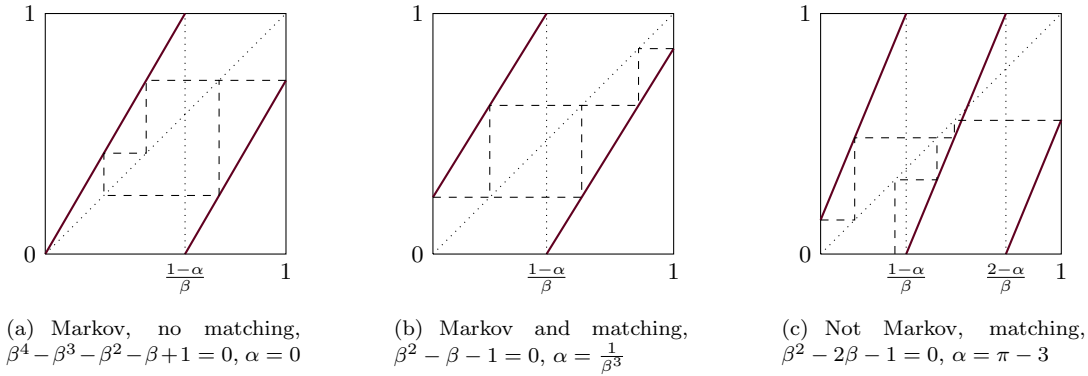
\begin{figure}[h]
\centering
\subfigure[Markov, no matching, $\beta^4-\beta^3-\beta^2-\beta+1=0$, $\alpha=0$]{
\begin{tikzpicture}[scale=3.2]
\draw(0,0)node[left]{\small $0$}--(.5807,0)node[below]{\small $\frac{1-\alpha}{\beta}$}--(1,0)node[below]{\small$1$}--(1,1)--(0,1)node[left]{\small $1$}--(0,0);
\draw[thick, purple!50!black](0,0)--(.5807,1) (.5807,0)--(1,.7221);
\draw[dotted](0,0)--(1,1)(.5807,0)--(.5807,1);
\draw[dashed](1,.7221)--(.7221,.7221)--(.7221,.2435)--(.2435,.2435)--(.2435,.4193)--(.4193,.4193)--(.4193,.7221)--(.7221,.7221);
\end{tikzpicture}}
\hspace{1cm}
\subfigure[Markov and matching, $\beta^2-\beta-1=0$, $\alpha = \frac1{\beta^3}$]{
\begin{tikzpicture}[scale=3.2]
\draw(0,0)node[left]{\small $0$}--(.4721,0)node[below]{\small $\frac{1-\alpha}{\beta}$}--(1,0)node[below]{\small$1$}--(1,1)--(0,1)node[left]{\small $1$}--(0,0);
\draw[thick, purple!50!black](0,.2361)--(.4721,1) (.4721,0)--(1,.8541);
\draw[dotted](0,0)--(1,1)(.4721,0)--(.4721,1);
\draw[dashed](0,.2361)--(.2361,.2361)--(.2361,.618)--(.618,.618)--(.618,.2361)--(.2361,.2361)(1,.8541)--(.8541,.8541)--(.8541,.618)--(.618,.618);
\end{tikzpicture}}
\hspace{1cm}
\subfigure[Not Markov, matching, $\beta^2-2\beta-1=0$, $\alpha = \pi-3$]{
\begin{tikzpicture}[scale=3.2]
\draw[white](1.1,0)--(1.1,1);
\draw(0,0)node[left]{\small $0$}--(.3556,0)node[below]{\small $\frac{1-\alpha}{\beta}$}--(.7698,0)node[below]{\small $\frac{2-\alpha}{\beta}$}--(1,0)node[below]{\small$1$}--(1,1)--(0,1)node[left]{\small $1$}--(0,0);
\draw[thick, purple!50!black](0,.1416)--(.3556,1) (.3556,0)--(.7698,1)(.7698,0)--(1,.558);
\draw[dotted](0,0)--(1,1)(.3556,0)--(.3556,1)(.7698,0)--(.7698,1);
\draw[dashed](0,.1416)--(.1416,.1416)--(.1416,.4834)--(.4834,.4834)--(.4834,.3087)--(.3087,.3087)--(.3087,0)(1,.5558)--(.5558,.5558)--(.5558,.4834)--(.4834,.4834);
\end{tikzpicture}}
\caption{The $\beta x + \alpha$ (mod 1)-transformation for various values of $\alpha$ and $\beta$. In (a) the points $0^+$ and $0^-$ are mapped to different periodic orbits by $T_{\alpha}$. In (b) we have $T^2_{\alpha}(0^-) = T^2_{\alpha}(0^+)$ and both points are part of the same 2-periodic cycle. In (c) we have taken $\alpha = \pi-3$ and $\beta$ equals the tribonacci number and we see that $T^2_{\alpha}(0^+) = T^2_{\alpha}(0^-)$.}
\label{f:variousalpha}
\end{figure}

Write
\[ \Delta(0) = \Big[0, \frac{1-\alpha}{\beta} \Big), \  \Delta(k) = \Big[ \frac{k-\alpha}{\beta}, 1 \Big], \ 
\Delta(i) = \Big[ \frac{i-\alpha}{\beta}, \frac{i+1-\alpha}{\beta} \Big), \, 1 \le i \le k-1.\]
and let $m$ denote the one-dimensional Lebesgue measure. Then $m\big( \Delta(i) \big) =\frac1{\beta}$ for all $1 \le i \le k-1$ and $m \big( \Delta(0) \big),\ m\big( \Delta(k) \big) \le \frac1{\beta}$ with the property that
\[ m \big( \Delta(0) \big) + m\big( \Delta(k) \big) = 1- \frac{k-1}{\beta}.\]
We define the cylinder sets for $T_{\alpha}$ as follows. For each $n \ge 1$ and $e_1 \cdots e_n \in \{0,1, \ldots, k\}^n$, write
\[ \Delta_{\alpha}(e_1 \cdots e_n) = \Delta(e_1 \cdots e_n) = \Delta(e_1) \cap T^{-1}_{\alpha} \Delta(e_2) \cap \cdots \cap T^{-(n-1)}_{\alpha} \Delta(e_n),\]
whenever this is non-empty. We have the following result.

\begin{lemma}\label{l:1overbeta}
Let $\beta >1$ and $\alpha \in [0,1]$ be given. Then 
$|T^n_{\alpha} (0^+) - T^n_{\alpha}(0^-) | = \frac{j}{\beta}$ for some $j \in \N$
if and only if matching occurs at iterate $n+1$.
\end{lemma}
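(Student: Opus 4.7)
The plan is to exploit the fact that one iterate of $T_\alpha$ acts on the difference $T^n_\alpha(0^+)-T^n_\alpha(0^-)$ by multiplication by $\beta$ followed by subtraction of an integer (the integer that reduces the result mod $1$). Since the two representatives $T^{n+1}_\alpha(0^\pm)$ both lie in $[0,1)$, their difference is confined to $(-1,1)$, so integer differences can only be $0$. This observation drives both implications.

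Concretely, I would begin by writing, for the appropriate integers $k_+, k_- \in \{0,1,\ldots,k\}$ (determined by which branch $T^n_\alpha(0^\pm)$ lies in),
\[
T^{n+1}_\alpha(0^+) - T^{n+1}_\alpha(0^-) = \beta\bigl(T^n_\alpha(0^+) - T^n_\alpha(0^-)\bigr) + (k_- - k_+).
\]
For the ``if'' direction, assume matching occurs at iterate $n+1$. Then the left-hand side is $0$, so $\beta(T^n_\alpha(0^+) - T^n_\alpha(0^-)) = k_+ - k_- \in \Z$. Taking absolute values gives $|T^n_\alpha(0^+) - T^n_\alpha(0^-)| = |k_+ - k_-|/\beta$, which is of the form $j/\beta$ with $j \in \N$ (allowing $j=0$ in the degenerate case where matching already happened at step $n$).

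For the ``only if'' direction, suppose $|T^n_\alpha(0^+) - T^n_\alpha(0^-)| = j/\beta$ for some $j \in \N$. Then $\beta(T^n_\alpha(0^+) - T^n_\alpha(0^-)) = \pm j \in \Z$, and the displayed identity shows that $T^{n+1}_\alpha(0^+) - T^{n+1}_\alpha(0^-)$ is an integer. Since both iterates lie in $[0,1)$, this difference is in $(-1,1)$, so it must equal $0$, which is exactly matching at iterate $n+1$.

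There is no serious obstacle here: the only point that needs care is the bookkeeping around the mod-$1$ reduction, which is handled cleanly by observing that the ``integer part'' subtracted off at the next step is always in $\{0,1,\ldots,k\}$, and that the range constraint $T^{n+1}_\alpha(0^\pm)\in[0,1)$ pins the only admissible integer difference to $0$.
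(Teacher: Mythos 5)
Your proof is correct and rests on the same underlying fact as the paper's: two points have the same image under $T_\alpha$ exactly when they differ by a multiple of $\frac{1}{\beta}$. The paper states this directly as a property of the preimage sets $T_\alpha^{-1}(y)$, whereas you derive it by the forward computation $T^{n+1}_\alpha(0^+) - T^{n+1}_\alpha(0^-) = \beta\bigl(T^n_\alpha(0^+) - T^n_\alpha(0^-)\bigr) + (k_- - k_+)$ together with the observation that the difference of two points of $[0,1)$ can only be the integer $0$; this is just a more explicit rendering of the same one-line argument.
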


\begin{proof}
For every $y \in [0,1)$, the preimage set $T_\alpha^{-1}(y)$ consists
of points in which every pair is $j/\beta$ apart for some $j \in \N$.
Hence it is necessary and sufficient that 
$|T^n_{\alpha} (0^+) - T^n_{\alpha}(0^-) | = j/\beta$ for matching
to occur at iterate $n+1$.
\end{proof}

\begin{remark}\label{rem:pw_affine}{\rm In what follows, to determine the value of $A_{\beta}$ we consider functions $w: [0,1] \to [0,1], \alpha \mapsto w(\alpha)$ on the parameter space of $\alpha$'s. The map $T_\alpha$ and its iterates are piecewise affine, also as function of $\alpha$. Using the chain rule repeatedly, we get
\begin{eqnarray*}
\frac{d}{d\alpha} T^n_\alpha \big( w(\alpha) \big) &=&
\frac{\partial}{\partial \alpha} T_\alpha \big(T^{n-1}_\alpha (w(\alpha))\big) +
\frac{\partial}{\partial x} T_\alpha \big(T^{n-1}_\alpha w(\alpha))\big)
\frac{d}{d\alpha} T^{n-1}_\alpha \big(w(\alpha )\big)  \\
&=& 1 + \beta \frac{d}{d\alpha} T^{n-1}_\alpha \big(w(\alpha) \big) \\
&\vdots & \nonumber \\
&=& 1+\beta + \beta^2 + \dots + \beta^{n-1} + \beta^n \frac{d}{d\alpha} w(\alpha)
= \frac{\beta^n-1}{\beta-1} + \beta^n \frac{d}{d\alpha} w(\alpha).
\end{eqnarray*}  
In particular, if $w(\alpha)$ is $n$-periodic, then  
$\frac{\beta^n-1}{\beta-1} + \beta^n \frac{d}{d\alpha} w(\alpha)
= \frac{d}{d\alpha} w(\alpha)$, so
$\frac{d}{d\alpha} w(\alpha) = - \frac{1}{\beta-1}$, independently of the period $n$.
Similarly, if $T_\alpha^n \big(w(\alpha) \big)$ is independent of $\alpha$, then 
$\frac{d}{d\alpha} w(\alpha) = - \frac{1}{\beta-1}(1-\frac{1}{\beta^n})$, whereas if 
$T_\alpha^n \big(w(\alpha)\big) \in \partial \Delta(1)$, then 
$\frac{d}{d\alpha} w(\alpha) = - \frac{1}{\beta-1}(1+\frac{1}{\beta^{n+1}})$. 
Finally, if $w(\alpha) \equiv 0^+$ is constant,
then $\frac{d}{d\alpha} T^n_\alpha ( 0^+) = \frac{\beta^n-1}{\beta-1}$.
}\end{remark}

\section{Quadratic Pisot Numbers}\label{sec:nonunit}

Solving $T^j(0^-)-T^j(0^-)=0$ using equation \eqref{q:0orbit}, we observe that
matching can only occur if $\beta$ is an algebraic integer.
In this section we look at quadratic Pisot integers; these are the leading roots
of the equations
\begin{equation}\label{eq:quap}
\beta^2-k\beta\pm d = 0, \qquad k,d \in \N,\ k > d\pm 1.
\end{equation}
The condition $k > d\pm1$ ensures that the algebraic conjugate of $\beta$ lies in
$(-1,0)$ and $(0,1)$ respectively. 
If this inequality fails, then no matching occurs:

\begin{prop}
If $\beta>1$  is an irrational quadratic integer, but not Pisot, then there is no matching.
\end{prop}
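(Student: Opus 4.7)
The plan is to run a descent on the first matching step, exploiting the structure of the quadratic integer ring $\mathbb Z[\beta]$. First I would introduce the invariant $U_n := T_\alpha^n(0^-) - T_\alpha^n(0^+)$. Subtracting the two expressions in \eqref{q:0orbit}, the $\alpha$-contributions cancel, so $U_n \in \mathbb Z[\beta]$ with $U_0 = 1$ and the one-step recursion $U_n = \beta\,U_{n-1} - (b_n - a_{n-1})$ (setting $a_0 := 0$). Because both critical orbits lie in $[0,1]$, one also has the a priori bound $|U_n|\leq 1$ for every $n$, and matching at step $m$ is exactly $U_m = 0$.

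Assuming for contradiction that matching occurs, let $m\geq 1$ be minimal with $U_m = 0$. The case $m=1$ is immediate, since $U_1 = \beta - b_1$ and $U_1 = 0$ would force $\beta \in \mathbb Z$. For $m\geq 2$ the recursion yields $\beta\,U_{m-1} = b_m - a_{m-1} \in \mathbb Z$, and this is the crucial constraint. Writing $U_{m-1} = s + t\beta$ with $s,t\in\mathbb Z$ and using the minimal polynomial in the form $\beta^2 = k\beta \mp d$, one computes
\[
\beta\,U_{m-1} = (s + tk)\beta \mp t d,
\]
so integrality forces $s = -tk$ and hence $U_{m-1} = t(\beta - k) = -t\beta'$, using $\beta + \beta' = k$.

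The structural fact behind the previous step is the identity $\{\,x\in\mathbb Z[\beta] : \beta x \in \mathbb Z\,\} = \mathbb Z\beta'$; this is what forces the Galois conjugate into an argument whose raw data involves only $\beta$. To finish, combine $|U_{m-1}|\leq 1$ with $U_{m-1} = -t\beta'$ to get $|t|\cdot|\beta'|\leq 1$. A short check rules out $|\beta'|=1$ (it would force $\beta'=\pm 1$ and hence $\beta\in\mathbb Q$), and the non-Pisot hypothesis excludes $|\beta'|<1$, so in fact $|\beta'|>1$. Then $|t|<1$, forcing $t=0$ and $U_{m-1}=0$, contradicting the minimality of $m$. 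I do not foresee a real technical obstacle; the one step requiring insight is noticing the identity $\{x : \beta x\in\mathbb Z\} = \mathbb Z\beta'$, which is what converts the innocuous integrality $\beta U_{m-1}\in\mathbb Z$ into tight size information about the Galois conjugate.
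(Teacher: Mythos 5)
Your argument is correct and is essentially the paper's proof in different packaging: both hinge on the observation that one step before the first matching time the difference $U_{m-1}$ satisfies $\beta U_{m-1}\in\Z$ (this is exactly Lemma~\ref{l:1overbeta}), and then combine the minimal polynomial with the a priori bound $|U_{m-1}|\le 1$ to force the Galois conjugate to satisfy $|\beta'|<1$, i.e.\ the Pisot condition. The only real difference is at the finish: the paper converts its conclusion $d<\beta$ into the explicit inequalities $k>d\pm 1$ by a sign-by-sign computation with the quadratic formula, whereas you work directly with $|\beta'|=d/\beta$ and close by a descent on the minimal matching time --- tidier, and it also absorbs the paper's separate step ruling out a nonnegative linear coefficient, but it is not a different method.
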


\begin{proof}
Let $\beta^2 \pm k\beta \pm d=0$, $k \geq 0$, $d \geq 1$, be the characteristic equation of $\beta$.
Since $\beta$ is a quadratic irrational, we have
$T_\alpha^j(0^-)-T_\alpha^j(0^+)= n\beta +m$ for some integers $n,m$
(which depend on $j$ and $\alpha$).
If there were matching for $T_\alpha$ then, by Lemma~\ref{l:1overbeta}, there exists $\ell \in \Z$ with
$|\ell|<\beta $ such that $n\beta+m=\ell/\beta$ which amounts to
$n\beta^2+m\beta-\ell=0$.
However, since $\beta \notin \Q$, this last equation must be an integer multiple
of the characteristic equation of $\beta$, therefore
$|\ell|=|n|d$ and thus $d<\beta$.

Note that the linear term of the characteristic equation cannot be $+k\beta$ ($k\geq0$). 
Indeed if this were the case  then the characteristic equation would lead to $\beta+k=\pm \frac{d}{\beta}$, which contradicts the fact that $\beta>1$.

Now if the characteristic equation is $\beta^2 - k\beta - d=0$, then $\beta -k=\frac{d}{\beta} \in (0,1)$. Hence
$$
 \beta = \frac{k}{2} + \sqrt{\Big(\frac{k}{2}\Big)^2 + d} < k+1
$$
which reduces to $k>d-1$ (i.e. $\beta$ is Pisot).

If on the other hand the characteristic equation is $\beta^2 - k\beta + d=0$, 
then $\beta -k=-\frac{d}{\beta} \in (-1,0)$. Hence
$$
 \beta = \frac{k}{2} + \sqrt{\Big(\frac{k}{2}\Big)^2 - d} > k-1
$$
which reduces to $k>d+1$ (i.e., $\beta$ is Pisot).

This proves that matching for some $\alpha$ forces the slope $\beta$ to be Pisot.
\end{proof}

\subsection{Case $+d$}\label{s:plusd}
In the first part of this section, let $\beta = \beta(k,d) =  \frac{k}{2} + \sqrt{(\frac{k}{2})^2-d}$ 
denote the leading root 
of $\beta^2-k\beta+d = 0$ for positive integers $k > d+1$.

\begin{theorem}
The bifurcation set $A_\beta$ has Hausdorff dimension $\dim_H(A_\beta) = \frac{\log d}{\log \beta}$.
\end{theorem}

\begin{proof}
Clearly $k-1 < \beta < k$, so $T_\alpha$ has either $k$ or $k+1$ branches depending on 
whether $\alpha < k-\beta$ or $\alpha \geq k-\beta$. More precisely,
\[ T_\alpha(0^-) - T_\alpha(0^+) = \begin{cases}
                 \beta - (k-1) & \text{ if } \alpha \in [0,k-\beta) \quad \text{($k$ branches);}\\
                 \beta - k = -\frac{d}{\beta}& \text{ if } \alpha \in [k-\beta,1) \quad \text{($k+1$ branches).}
                              \end{cases}\]
In the latter case, we have matching in the next iterate due 
to Lemma~\ref{l:1overbeta}.

\vskip .2cm 
Let $\gamma := \beta - (k-1)$ and note that $\frac{k-1-d}{\beta} < \gamma < \frac{k-d}{\beta}$, 
and $\beta\gamma - (k-1-d) = \gamma$, $\gamma-1 = -\frac{d}{\beta}$. It follows that if $x \in \Delta(i) \cap [0,1-\gamma)$, then 
\[ T_\alpha(x+\gamma) = \begin{cases}
 T_\alpha(x) + \beta \gamma -(k-d) = T_\alpha(x) - \frac{d}{\beta} 
  & \text{ if } x+\gamma \in \Delta(i+k-d)  ;\\   
  T_\alpha(x) + \beta \gamma -(k-1-d) = T_\alpha(x) + \gamma 
  & \text{ if } x+\gamma \in \Delta(i+k-1-d).  
                      \end{cases}\]
In the first case we have matching in the next step, and in 
the second case, the difference $\gamma$ remains unchanged. The transition
graph for the differences $T^j(0^-) - T^j(0^+)$ is shown in Figure~\ref{fig:plusd}

\begin{figure}[ht]
\begin{center}
\begin{tikzpicture}[scale=2]
\node[rectangle, draw=black] at (0,0) {1};
\node[rectangle, draw=black] at (1.3,0) {$\gamma$};
\node[rectangle, draw=black] at (2.7,0) {$-d/\beta$};
\node[rectangle, draw=black] at (4.6,0) {\small matching};
\draw[->] (0.3,0)--(1,0);
\draw[->] (1.5,0)--(2.3,0);
\draw[->] (3.2,0)--(4,0);
\draw[->] (1.5,0.1) .. controls (1.7,.4) and (1.4,0.5) .. (1.3,0.2);
\node at (1.67,.3) {\small $d$};
\end{tikzpicture}
\caption{The transition graph for the root of $\beta^2 - k\beta + d=0$. 
The number $d$ stands for the $d$ possible $i \in \{ 0, \dots, d-1\}$ such that 
$T_\alpha^j(0^-) \in \Delta(i)$ and $T_\alpha^j(0^+) = T_\alpha^j(0^-) + \gamma \in \Delta(i+k-1-d)$.
(In fact, $i = d$ is also possible, but $\Delta(0)$ and $\Delta(d)$ together form 
a single branch in the circle map $g_\alpha$ below.)}
\label{fig:plusd}
\end{center}
\end{figure}
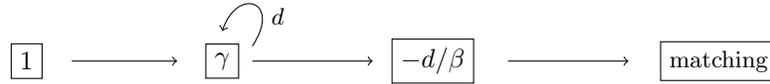

Define for $i = 0,\dots, d-1$ the ``forbidden regions''
\[ 
V_i := \{ x \in \Delta(i) : x+\gamma \in \Delta(i+k-d) \} 
= \Big[ \frac{i+k-d-\alpha}{\beta} - \gamma\ , \ \frac{i+1-\alpha}{\beta}\Big).
\]
Note that $T_\alpha(\frac{i+k-d-\alpha}{\beta} - \gamma) = k-\beta$ and 
$T_\alpha(k-\beta) = \alpha$. Define $g_\alpha:[0,k-\beta] \to [0,k-\beta]$ as
\[ g_\alpha(x) := \min\{ T_\alpha(x), k-\beta \} = \begin{cases}
               k-\beta & \text { if } x \in V := \bigcup_{i=0}^{d-1} V_i; \\
               T_\alpha(x) & \text{ otherwise.}
              \end{cases}\]


\begin{figure}[h]
\centering
\subfigure[$T_\alpha$]{
\begin{tikzpicture}[scale=3.2]
\filldraw[yellow!30] (.092,0) rectangle (.163,1);
\filldraw[yellow!30] (.325,0) rectangle (.395,1);
\filldraw[yellow!30] (.557,0) rectangle (.628,1);
\draw(0,0)node[left]{\small $0$}--(.12,0)node[below]{\small $V_0$}--(.36,0)node[below]{\small $V_1$}--(.585,0)node[below]{\small $V_2$}--(1,0)node[below]{\small$1$}--(1,1)--(0,1)node[left]{\small $1$}--(0,.697)node[left]{\small $k-\beta$}--(0,.3)node[left]{\small $\alpha$}--(0,0);
\draw[thick, purple!50!black](0,.3)--(.163,1)(.163,0)--(.395,1)(.395,0)--(.628,1)(.628,0)--(.86,1) (.86,0)--(1,.603);
\draw[dotted](0,0)--(1,1)(.163,0)--(.163,1)(.395,0)--(.395,1)(.628,0)--(.628,1)(.86,0)--(.86,1);
\draw[dashed](0,.697)--(.697,.697)--(.697,0);
\draw[dotted](.092,0)--(.092,1)(.557,0)--(.557,1)(.325,0)--(.325,1);
\end{tikzpicture}}
\hspace{1cm}
\subfigure[$g_\alpha$]{
\begin{tikzpicture}[scale=3.2]
\draw(0,0)node[left]{\small $0$}--(.12,0)node[below]{\small $V_0$}--(.36,0)node[below]{\small $V_1$}--(.585,0)node[below]{\small $V_2$}--(.697,0)--(.697,.697)--(0,.697)node[left]{\small $k-\beta$}--(0,.3)node[left]{\small $\alpha$}--(0,0);
\draw[very thick, purple!50!black](0,.3)--(.092,.697)--(.163,.697)(.163,0)--(.325,.697)--(.395,.697)(.395,0)--(.557,.697)--(.628,.697)(.628,0)--(.697,.3);
\draw[dotted](0,0)--(.697,.697)(0,.3)--(.697,.3)(.163,0)--(.163,.697)(.395,0)--(.395,.697)(.628,0)--(.628,.697)(.092,0)--(.092,.697)(.557,0)--(.557,.697)(.325,0)--(.325,.697);
\end{tikzpicture}}
\caption{The maps $T_\alpha$ and $g_\alpha$ for $\beta$ satisfying $\beta^2-5\beta+3$ and $\alpha=0.3$.}
\end{figure}
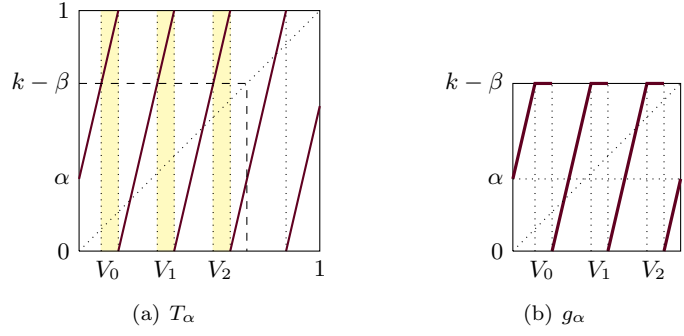

After identifying $0 \sim k-\beta$ we obtain a circle $S$
of length $m(S) = k-\beta$, and $g_\alpha:S \to S$ becomes a non-decreasing degree $d$
circle endomorphism with $d$ plateaus $V_0, \dots, V_{d-1}$, and slope $\beta$ elsewhere. Hence,
if $T_\alpha$ has no matching, then
\[ 
X_\alpha = \{ x \in S : g_\alpha^n(x) \notin \bigcup_{i=0}^{d-1} V_i \text{ for all } n \in \N\}
\]
is a $T_\alpha$-invariant set, and all invariant probability measures on it have the same 
Lyapunov exponent $\int \log T'_\alpha d\mu = \log \beta$. Using the dimension formula 
$\dim_H(\mu) = h(\mu)/\int \log T'_\alpha d\mu$ (see \cite[Proposition 4]{L81}
and \cite{Y82}), 
and maximizing the entropy over all such measures, we find $\dim_H(X_\alpha) = \frac{\log d}{\log \beta}$.

In fact, $X_\alpha$ can be covered by $a_n = O(d^n)$ intervals length $\le \beta^{-n}$.
If $T_{\alpha}$ does not have matching, then for each $n \ge 1$ there is a 
maximal interval $J=J(\alpha)$, such that $T_{\alpha}(0^+) \in J$ and 
$\frac{d}{dx}g_{\alpha}^n(x)= \beta^n$ on $J$. Hence $m(J) \le \beta^{-n}$ and moreover, 
for each point $w \in \partial J$ there is an $m \le n$ and a point $z \in \partial V$, 
such that $T_{\alpha}^m (w) = z$.

Note that the points $w$ and $z \in \partial J$ depend on $\alpha$, in the following manner.
Given a non-matching parameter, let $U$ be its neighbouhood on which the function 
$w : \alpha \mapsto w(\alpha)$ is continuous and such that there is an 
$m \in \N$ with $T_\alpha^m\big(w(\alpha)\big) =: z(\alpha) \in \partial V$
for all $\alpha \in U$. 
The definition of $V = V(\alpha)$ gives that 
$\frac{d}{d\alpha}\partial V(\alpha) = -\frac1{\beta}$. Using Remark~\ref{rem:pw_affine} we find
\[ 
\frac{d}{d\alpha} z(\alpha) = \frac{d}{d\alpha} T^m_\alpha \big(w(\alpha) \big)
=  \frac{\beta^m-1}{\beta-1} + \beta^m \frac{d}{d\alpha} w(\alpha) = -\frac1{\beta}.
\]
This implies that $\frac{d}{d\alpha} w(\alpha) =  -\frac{1}{\beta-1}(1+\frac{\beta-2}{\beta^{m+1}}) < 0$ 
for all $m \geq 0$ and $\beta > 1$. 
Hence, $J(\alpha)$ is an interval of length $\leq \beta^{-n}$ and $\partial J(\alpha)$ 
moves to the left as $\alpha$ increases. At the same time, $T_\alpha(0^+) = \alpha$ moves 
to the right with speed $1$ as $\alpha$ increases.
Therefore $U$ is an interval with $\frac1C m(J) \leq m(U) \leq C m(J)$, where $C > 0$
depends on $\beta$ but not on $\alpha$ or $U$.

This proves that the upper box dimension $\overline{\dim}_B(A_\beta) = \frac{\log d}{\log \beta}$,
and in particular $\dim_H(A_\beta) = 0$ for $d = 1$.

For the lower bound estimate of $\dim_H(A_\beta)$ and $d \geq 2$,
we introduce symbolic dynamics, assigning the labels $i = 0, \dots, d-2$ to 
the intervals
$$
Z_i := [\ (i+k-d-\alpha)/\beta\ ,\ (i+1+k-d-\alpha)/\beta) \ ),
$$
that is $V_i$ together the component of $S \setminus V$ directly
to the right of it, and the label $d-1$ to the remaining
interval: $Z_{d-1} = S \setminus \cup_{i=0}^{d-2} Z_i$.
Therefore we have $g_\alpha(Z_i) = g_\alpha(Z_i\setminus V_i) = S$.
Let $\Sigma = \{ 0, \dots, d-1\}^{\N_0}$ with metric
$d_\beta(x,y) = \beta^{1-m}$ for $m = \inf\{ i \geq 0 : x_i \neq y_i\}$.
Then each $n$-cylinder has diameter $\beta^{-n}$,
the Hausdorff dimension $\dim_H(\Sigma) = \frac{\log d}{\log \beta}$,
and the usual coding map $\pi_\alpha : S \to \Sigma$, when restricted to $X_\alpha$, 
is injective.

\begin{lemma}
 Given an $n$-cylinder $[e_0,\dots, e_{n-1}] \subset \Sigma$, there is a set
 $C_{e_0\dots e_{n-1}} \subset S$ consisting of at most $n$ half-open intervals of
 combined length $\beta^{-n} m(S)$ such that
 $\pi_\alpha(C_{e_0\dots e_{n-1}}) = [e_0,\dots, e_{n-1}]$ and
 $g_\alpha^n:C_{e_0\dots e_{n-1}} \to S$ is onto with slope $\beta^n$.
\end{lemma}

\begin{proof}
The proof is by induction. For $e_0 \in \{0, \dots, d-1\}$, let $C_{e_0} := Z_{e_0} \setminus V_{e_0}$ be 
the domain of $S\setminus V$ with label $e_0$.
This interval has length $m(S)/\beta$, is half-open and clearly $g_\alpha:C_{e_0} \to S$
is onto with slope $\beta$.

Assume now by induction that for the $n$-cylinder $[e_0,\dots, e_{n-1}]$, the set $C_{e_0\dots e_{n-1}}$ is 
constructed, with $\leq n$ half-open components $C^j_{e_0\dots e_{n-1}}$ so that $\sum_j m(C^j_{e_0\dots e_{n-1}})
= m(S) \beta^{-n}$. In particular, $g_\alpha^n(C^j_{e_0\dots e_{n-1}})$ are pairwise disjoint, since any
overlap would result in $\sum_j m(C^j_{e_0\dots e_{n-1}}) < m(S) \beta^{-n}$.

Let $e_n \in \{0, \dots, d-1\}$ be arbitrary and let
$C^j_{e_0\dots e_{n-1}e_n}$ be the subset of $C^j_{e_0\dots e_{n-1}}$ that
$g_\alpha^n$ maps into $Z_{e_n} \setminus V_{e_n}$.
Then $C^j_{e_0\dots e_{n-1}e_n}$ consists of two or one half-open intervals, depending on whether
$g_\alpha^n(C^j_{e_0\dots e_{n-1}e_n}) \owns g_\alpha(V) = n-\beta \sim 0$ or not.
But since $g_\alpha^n(C^j_{e_0\dots e_{n-1}})$ are pairwise disjoint, only
one of the  $C^j_{e_0\dots e_{n-1}e_n}$ can have two components,
and therefore $C_{e_0\dots e_{n-1}e_n} := \cup_j C^j_{e_0\dots e_{n-1}e_n}$
has at most $n+1$ components.
Furthermore $g_\alpha^{n+1}(C_{e_0\dots e_{n-1}e_n}) = S$ and the slope
is $\beta^{n+1}$. 
(See also \cite[Lemma 4.1]{BS16} for this result in a simpler context.)
\end{proof}

It follows that $\pi_\alpha$ is a Lipschitz map such that 
for each $n$ and each $n$-cylinder $[e_0,\dots, e_{n-1}]$, the set $\pi_\alpha^{-1}([e_0,\dots, e_{n-1}]) \cap X_\alpha$
is contained in one or two intervals of combined length $m(S) \beta^{1-n}$.
This suffices to conclude (cf.\ \cite[Lemma 5.3]{BS16}) that $X_\alpha$
and $\pi_\alpha(X_\alpha)$ have the same Hausdorff dimension
$\frac{\log d}{\log \beta}$.

Let $G_n:[0,k-\beta) \to S$, $\alpha \mapsto g_\alpha^n(0)$. Then the $U$ from above is a maximal interval
on which $G_n$ is continuous, and such that all $\alpha \in U$ have the same coding for the iterates
$G_m(\alpha)$, $1 \leq m \leq n$, with respect to the labelling of the $Z_i = Z_i(\alpha)$.
This allows us to define a coding map $\pi:A_\beta \to \Sigma$.
As is the case for $\pi_\alpha$, given any $n$-cylinder $[e_0,\dots e_{n-1}]$,
the preimage $\pi^{-1}([e_0,\dots e_{k-1}])$ consists of at most $n$ intervals, say $U^j$,
with $G_k(\cup_j U^j) = S$,
so the combined length satisfies $\frac1C \beta^{-n} \leq m(\cup U^j) \leq C \beta^{-n}$.
This gives a one-to-one correspondence between the components $J(\alpha)$ and
intervals $U$, and hence $A_\beta$ can be covered by $a_n$ such intervals.
More importantly, $\pi$ is Lipschitz, and its inverse on each $n$-cylinder has (at most $n$) 
uniformly Lipschitz branches.
It follows that $\dim_H(A_\beta) = \dim_H(\pi(A_\beta))$ (cf.\ \cite[Lemma 5.3]{BS16}), and
since $\Sigma \setminus \pi(A_\beta)$ is countable, also
$\dim_H(A_\beta) = \dim_H(\Sigma) = \frac{\log d}{\log \beta}$.
\end{proof}

\begin{remark}
Observe that if $g_\alpha^n(0^+) \in \bigcup_{i=0}^{d-1} V_i$, then $0^+$ is $n+1$-periodic 
under $g_\alpha$. Let $a_n$ be the number of periodic points under the $d$-fold doubling map
of prime period $n+1$. Therefore there are $a_n$ parameter intervals such that matching occurs at
$n+1$ iterates, and these have length $\sim \beta^{-n}$. 
If $d = 1$, then $a_n = \phi(n)$ is Euler's totient function, see Remark~\ref{rm:totient},
and for $d = 2$, $k=4$, so $\beta = 2 + \sqrt{2}$, then $(a_n)_{n \in \N}$ is
exactly the sequence  A038199 on OEIS \cite{OEIS}, see
the observation in Section~\ref{s:numerical} for $\beta = 2 + \sqrt{2}$, which in fact
holds for any other quadratic integer with $d = 2$. In fact, there is a 
fixed sequence $(a_n)_{n\in\N}$ for each value of $d \in \N$.
\end{remark}

\subsection{Case $-d$}\label{s:minusd}
Now we deal with the case $\beta = \beta(k,d) = \frac{k}{2} + \sqrt{(\frac{k}{2})^2+d}$, 
which is the leading root 
of $\beta^2-k\beta-d = 0$ for positive integers $k > d-1$.

\begin{theorem}
The bifurcation set $A_\beta$ has Hausdorff dimension $\dim_H(A_\beta) = \frac{\log d}{\log \beta}$.
\end{theorem}

\begin{proof}
Since $k < \beta < k+1$, $T_\alpha$ has either $k+1$ or $k+2$ branches depending on 
whether $\alpha < k+1-\beta$ or $\alpha \geq k+1-\beta$. More precisely,
\[ T_\alpha(0^-) - T_\alpha(0^+) = \begin{cases}
                 \beta - k =  \frac{d}{\beta} & \text{ if } \alpha \in [0,k+1-\beta) \quad \text{($k+1$ branches);}\\
                 \beta - (k+1) = \frac{d}{\beta} - 1& \text{ if } \alpha \in [k+1-\beta,1) \quad \text{($k+2$ branches).}
                              \end{cases}\]
In the first case, we have matching in the next iterate due to Lemma~\ref{l:1overbeta}.

\vskip .2cm 
Let $\gamma := (k+1)-\beta = 1-\frac{d}{\beta}$ and note that 
$1-\gamma = \frac{d}{\beta} \in \Delta(d)$, 
and $\beta\gamma  = \beta - d$. 
It follows that if $x \in \Delta(i) \cap [0,1-\gamma)$, then 
\begin{eqnarray*}
T_\alpha(x+\gamma) &=& T_\alpha(x) + \beta -d \bmod 1 \\
&=&
\begin{cases}
  T_\alpha(x) + \beta-k =  T_\alpha(x) + \frac{d}{\beta}   & \text{ if } x+\gamma \in \Delta(i+k-d);\\   
  T_\alpha(x) + \beta  - (k+1) =  T_\alpha(x) - \gamma
  & \text{ if } x+\gamma \in \Delta(i+k+1-d).  
\end{cases}
\end{eqnarray*}
In the first case we have matching in the next step, and 
the second case, the difference $\gamma$ switches to $-\gamma$. 
Similarly, if $x \in \Delta(i) \cap [\gamma,1)$, then 
\[ 
T_\alpha(x-\gamma) = T_\alpha(x) - \beta + d \bmod 1 =
\begin{cases}
  T_\alpha(x) + \gamma & \text{ if } x-\gamma \in \Delta(i-(k+1)+d);\\   
  T_\alpha(x) -  \frac{d}{\beta}  & \text{ if } x-\gamma \in \Delta(i-k+d),  
\end{cases}
\]
and in the second case, we have matching in the next iterate. The transition graph for the differences $T^j(0^-) - T^j(0^+)$ is given in Figure~\ref{fig:minusd}.

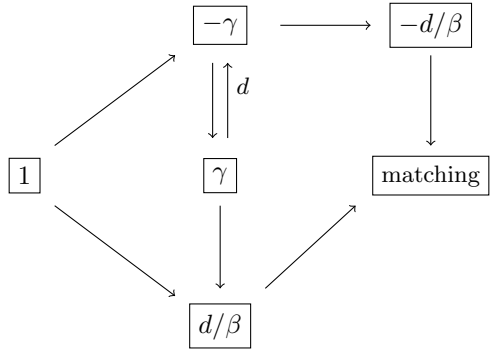
\begin{figure}[ht]
\begin{center}
\begin{tikzpicture}[scale=2]
\node[rectangle, draw=black] at (1.3,0) {$\gamma$};
\node[rectangle, draw=black] at (1.3,-1) {$d/\beta$};
\node[rectangle, draw=black] at (2.7,0) {\small matching};
\draw[->] (0.2,-.2)--(1,-.8);
\draw[->] (1.6,-.75)--(2.2,-.2);
\draw[->] (1.25,0.75)--(1.25,0.25);
\draw[->] (1.35,0.25)--(1.35,0.75);
\draw[->] (2.7,.8)--(2.7,.2);
\node[rectangle, draw=black] at (0,0) {1};
\node[rectangle, draw=black] at (1.3,1) {$-\gamma$};
\node[rectangle, draw=black] at (2.7,1) {$-d/\beta$};
\draw[->] (0.2,.2)--(1,.8);
\draw[->] (1.7,1)--(2.3,1);
\draw[->] (1.3,-0.2)--(1.3,-0.75);
\node at (1.45,.6) {\small $d$};
\end{tikzpicture}
\caption{The transition graph for the root of $\beta^2 - k\beta - d=0$. 
The vertical arrows stand for the $d$ possible $i$ that 
$T_\alpha^j(0^-) \in \Delta(i)$ and $T_\alpha^j(0^+) = T_\alpha^j(0^-) \pm \gamma \in [0,1]$.}
\label{fig:minusd}
\end{center}
\end{figure} 

As long as there is no matching, $T_\alpha$ switches the order of $T_\alpha^j(0^-)$
and $T_\alpha^j(0^+)$, and therefore we consider the second iterate with ``forbidden regions''
as the composition of two degree $d$-maps with plateaus.

For the first iterate, define for $i = 0,\dots, d-1$ the ``forbidden regions''
\[ 
V_i := \{ x \in \Delta(i) : x+\gamma \in \Delta(i+k-d) \} 
= \Big[ \frac{i+1-\alpha}{\beta} \ , \ \frac{i+1+k-d-\alpha}{\beta} - \gamma\ \Big).
\]
Note that $T_\alpha(1 - \gamma) = \alpha = T_\alpha(0^+)$ and
$T_\alpha(\frac{i+1+k-d-\alpha}{\beta} - \gamma) = \gamma$. 
Define $f^1_\alpha:[0,1-\gamma] \to [\gamma,1]$ as
\[ f^1_\alpha(x) = \begin{cases}
               \gamma & \text { if } x \in V := \bigcup_{i=0}^{d-1}  V_i; \\
               T_\alpha(x) & \text{ otherwise.}
              \end{cases}
\]
For the second iterate, the ``forbidden regions'' are
\[ 
W_i := \{ x \in \Delta(i) : x - \gamma \in \Delta(i-(k-d)) \} 
= \Big[ \frac{i-(k-d)-\alpha}{\beta} + \gamma \ , \ \frac{i+1-\alpha}{\beta}  \Big)
\]
for $i = k-d+1 ,\dots, k$.
Note that $T_\alpha(\gamma) = \beta+\alpha - (k+1) = T_\alpha(0^-)$ and 
$T_\alpha(\frac{i-(k-d)-\alpha}{\beta} + \gamma) = 1-\gamma$. 
Define $f^2_\alpha:[\gamma,1] \to [0,1-\gamma]$ as
\[ f^2_\alpha(x) = \begin{cases}
               1-\gamma & \text { if } x \in W := \bigcup_{i=k-d+1}^{k} W_i; \\
               T_\alpha(x) & \text{ otherwise.}
              \end{cases}
\]  
The composition
$g_\alpha := f^2_\alpha \circ f^1_\alpha : [0,1-\gamma] \to [0, 1-\gamma]$,
once we identify $0 \sim 1-\gamma$, becomes a non-decreasing degree $d^2$ circle endomorphism
with $d+d^2$ plateaus, and slope $\beta^2$ elsewhere.

\begin{figure}[h]
\centering
\subfigure[$T_\alpha$]{
\begin{tikzpicture}[scale=3.2]
\filldraw[yellow!30] (.171,0) rectangle (.226,1);
\filldraw[yellow!30] (.435,0) rectangle (.49,1);
\filldraw[yellow!30] (.699,0) rectangle (.754,1);
\draw(0,0)node[left]{\small $0$}--(.2,0)node[below]{\small $V_0$}--(.46,0)node[below]{\small $V_1$}--(.73,0)node[below]{\small $V_2$}--(1,0)node[below]{\small$1$}--(1,1)--(0,1)node[left]{\small $1$}--(0,.35)node[left]{\small $\alpha$}--(0,.209)node[left]{\small $\gamma$}--(0,0);
\draw[thick, purple!50!black](0,.35)--(.171,1)(.171,0)--(.435,1)(.435,0)--(.699,1)(.699,0)--(.963,1) (.963,0)--(1,.141);
\draw[dotted](0,0)--(1,1)(.171,0)--(.171,1)(.435,0)--(.435,1)(.699,0)--(.699,1)(.963,0)--(.963,1);
\draw[dashed](0,.209)--(.791,.209)--(.791,1);
\draw[dotted](.226,0)--(.226,1)(.49,0)--(.49,1)(.754,0)--(.754,1);
\end{tikzpicture}}
\hspace{2cm}
\subfigure[$T_\alpha$]{
\begin{tikzpicture}[scale=3.2]
\filldraw[yellow!30] (.908,0) rectangle (.963,1);
\filldraw[yellow!30] (.435,1) rectangle (.38,0);
\filldraw[yellow!30] (.699,0) rectangle (.644,1);
\draw(0,0)node[left]{\small $0$}--(.209,0)node[below]{\small $\gamma$}--(.405,0)node[below]{\small $W_1$}--(.67,0)node[below]{\small $W_2$}--(.93,0)node[below]{\small $W_3$}--(1,0)--(1,1)--(0,1)node[left]{\small $1$}--(0,.791)node[left]{\small $1-\gamma$}--(0,.35)node[left]{\small $\alpha$}--(0,0);
\draw[thick, purple!50!black](0,.35)--(.171,1)(.171,0)--(.435,1)(.435,0)--(.699,1)(.699,0)--(.963,1) (.963,0)--(1,.141);
\draw[dotted](0,0)--(1,1)(.171,0)--(.171,1)(.435,0)--(.435,1)(.699,0)--(.699,1)(.963,0)--(.963,1);
\draw[dashed](.209,0)--(.209,.791)--(1,.791);
\draw[dotted](.38,0)--(.38,1)(.644,0)--(.644,1)(.908,0)--(.908,1);
\end{tikzpicture}}
\hspace{1cm}

\subfigure[$f^1_\alpha$]{
\begin{tikzpicture}[scale=3.2]
\draw(0,.209)node[below]{\small $0$}--(.2,.209)node[below]{\small $V_0$}--(.46,.209)node[below]{\small $V_1$}--(.73,.209)node[below]{\small $V_2$}--(.791,.209)--(.791,1)--(0,1)node[left]{\small 1}--(0,.35)node[left]{\small $\alpha$}--(0,.209)node[left]{\small $\gamma$};
\draw[very thick, purple!50!black](0,.35)--(.171,1)(.171,.209)--(.226,.209)--(.435,1)(.435,.209)--(.49,.209)--(.699,1)(.699,.209)--(.754,.209)--(.791,.35);
\draw[dotted](0,.35)--(.791,.35)(.171,.209)--(.171,1)(.435,.209)--(.435,1)(.699,.209)--(.699,1)(.226,.209)--(.226,1)(.49,.209)--(.49,1)(.754,.209)--(.754,1);
\node at (.92,.125) {\small $1-\gamma$};
\draw[white] (1.2,.209)--(1.2,1);
\end{tikzpicture}}
\hspace{.9cm}
\subfigure[$f^2_\alpha$]{
\begin{tikzpicture}[scale=3.2]
\draw(.209,0)node[below]{\small $\gamma$}--(.405,0)node[below]{\small $W_1$}--(.67,0)node[below]{\small $W_2$}--(.93,0)node[below]{\small $W_3$}--(1,0)--(1,.791)--(.209,.791)node[left]{\small $1-\gamma$}--(.209,.141)node[left]{\small $\beta+ \alpha-(k+1)$}--(.209,0)node[left]{\small $0$};
\draw[very thick, purple!50!black](.209,.141)--(.38,.791)--(.435,.791)(.435,0)--(.644,.791)--(.699,.791)(.699,0)--(.908,.791)--(.963,.791)(.963,0)--(1,.141);
\draw[dotted](.209,.141)--(1,.141)(.38,0)--(.38,.791)(.435,0)--(.435,.791)(.699,0)--(.699,.791)(.644,0)--(.644,.791)(.908,0)--(.908,.791)(.963,0)--(.963,.791);
\draw[white] (1.4,0)--(1.4,.791);
\end{tikzpicture}}
\caption{The maps $T_\alpha$, $f_\alpha^1$ and $f_\alpha^2$ for $\beta$ satisfying $\beta^2-3\beta-2$ and $\alpha=0.35$.}
\end{figure}
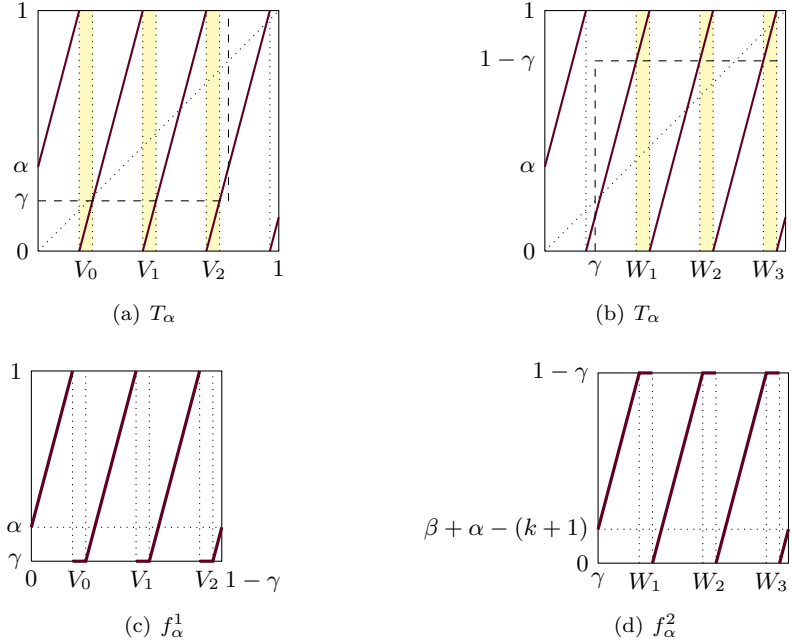

The argument in the previous case gives again that
\[ 
X_\alpha = \{ x \in [0,1-\gamma] : g_\alpha^n(x) \notin \text{ plateaus for all } n \in \N\}
\]
has Hausdorff dimension $\frac{\log d^2}{\log \beta^2} = \frac{\log d}{\log \beta}$,
and also that
$A_{\beta} = \{ \alpha \in [0, k-\beta] \, : \, g_\alpha^n(0^+) \notin \text{ plateaus} \}$ 
has $\dim_H(A_\beta) = \frac{\log d}{\log \beta}$.
\end{proof}

\section{Multinacci numbers}\label{sec:multinacci}
The last family of Pisot numbers that we consider are the multinacci numbers. Let $\beta$ be the Pisot number that satisfies $\beta^k - \beta^{k-1} - \dots -\beta-1=0$. These numbers increase and tend to $2$ as $k \to \infty$. The map $T_{\alpha}$ has either two or three branches. 

\begin{prop}\label{prop:two}
If $T_\alpha$ has two branches, i.e., $\alpha \in \big[0,\frac1{\beta^k}\big)$, then there is matching after $k$ steps.
\end{prop}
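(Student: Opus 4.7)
The plan is to exploit the multinacci relation to compute both critical orbits $T_\alpha^n(0^\pm)$ exactly and observe that they collapse to the same value at step $n=k$.

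First I would rewrite the multinacci relation $\beta^k = 1 + \beta + \dots + \beta^{k-1}$ as $(\beta-1)\beta^k = \beta^k - 1$, equivalently $\beta^{k+1} = 2\beta^k - 1$, equivalently $1/\beta^k = 2-\beta$. This already shows that $\alpha < 1/\beta^k$ is equivalent to $\alpha + \beta < 2$, i.e.\ to the condition that $T_\alpha$ has only the two branches $\Delta(0) = [0, (1-\alpha)/\beta)$ and $\Delta(1) = [(1-\alpha)/\beta, 1]$, confirming the assumption of the proposition.

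Next I would establish by induction on $n$ that for every $n = 1, \dots, k-1$ one has $T_\alpha^n(0^+) \in \Delta(0)$ and $T_\alpha^n(0^-) \in \Delta(1)$, with explicit closed forms
\[
T_\alpha^n(0^+) = \frac{\beta^n - 1}{\beta - 1}\,\alpha, \qquad
T_\alpha^n(0^-) = \beta^n - \frac{\beta^n - 1}{\beta - 1}\,(1-\alpha).
\]
Verifying that $T_\alpha^n(0^+) \in \Delta(0)$ amounts after rearrangement to $\alpha(\beta^{n+1}-1) < \beta - 1$; comparing with $\alpha < 1/\beta^k$ via the identity $(\beta-1)\beta^k = \beta^k - 1$, this is satisfied for $n+1 \leq k$. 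The $\Delta(1)$-containment of $T_\alpha^n(0^-)$ splits into a lower bound $T_\alpha^n(0^-) \geq (1-\alpha)/\beta$, which follows unconditionally from the multinacci identities, and an upper bound $T_\alpha^n(0^-) < 1$, which after simplification is equivalent to $\alpha < 2-\beta = 1/\beta^k$, i.e.\ exactly the hypothesis.

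Finally, applying the appropriate branch of $T_\alpha$ once more, both $T_\alpha^k(0^+)$ and $T_\alpha^k(0^-)$ reduce to $\beta^k \alpha$: for $0^+$ this is immediate from the formula, and for $0^-$ one uses $(\beta^k-1)/(\beta-1) = \beta^k$ (i.e.\ the multinacci relation) to see that the integer correction cancels exactly against $\beta^k$. Hence $T_\alpha^k(0^+) = T_\alpha^k(0^-) = \beta^k\alpha$, which is matching at step $k$. The only real obstacle is the careful bookkeeping in the induction, since the upper bound for $T_\alpha^n(0^-)$ is sharp and consumes the hypothesis $\alpha < 1/\beta^k$ without any slack; the proof would therefore be presented as two clean parallel inductions, each tied directly to the identity $(\beta-1)\beta^k = \beta^k - 1$.
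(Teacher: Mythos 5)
Your proof is correct and follows essentially the same route as the paper: both arguments track the two critical orbits through $\Delta(0)$ and $\Delta(1)$ respectively for the first $k-1$ iterates and then conclude matching at step $k$. The only differences are cosmetic — you close with the direct computation $T_\alpha^k(0^+)=T_\alpha^k(0^-)=\beta^k\alpha$ where the paper instead observes that the difference has shrunk to $\tfrac1\beta$ at step $k-1$ and invokes Lemma~\ref{l:1overbeta}, and you verify the $\Delta(0)$/$\Delta(1)$ containments explicitly where the paper says ``continuing in the same way.''
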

 
\begin{proof}
The map $T_\alpha$ has two branches if and only if
\[ \frac{2-\alpha}{\beta} \ge 1 \quad \Leftrightarrow \quad \alpha \le 2-\beta = 1-\frac1{\beta}-\frac1{\beta^2} - \cdots - \frac1{\beta^{k-1}}=\frac1{\beta^k}.\]
In case $\alpha \le \frac1{\beta^k}$ we have $T_{\alpha}(0^-) = \beta + \alpha -1 = \alpha + \frac1{\beta} + \frac1{\beta^2} + \cdots + \frac1{\beta^{k-1}}$. Since $T_{\alpha}(0^+) = \alpha$, 
this means that $T_{\alpha}(0^+) \in \Delta(0)$ and $T_{\alpha}(0^-) \in \Delta(1)$. Hence,
\[ T^2_{\alpha} (0^-) = \beta \alpha + 1 + \frac1{\beta} + \cdots + \frac1{\beta^{k-2}} + \alpha-1 = T^2_{\alpha} (0^+) + \frac1{\beta} + \cdots + \frac1{\beta^{k-2}}.\]
Continuing in the same way, we get that $T_{\alpha}^j(0^+) \in \Delta(0)$ and $T_{\alpha}^j(0^-) \in \Delta(1)$ for all $0 \le j \le k-2$ and that
\begin{eqnarray*} 
T^{j+1}_{\alpha} (0^-) &=& \beta(\beta^{j-1} + \cdots + \beta+1) \alpha + 1 + \frac1{\beta} + \cdots + \frac1{\beta^{k-(j+1)}} + \alpha-1 \\
&=& T^{j+1}_{\alpha} (0^+) + \frac1{\beta} + \cdots + \frac1{\beta^{k-(j+1)}}.
\end{eqnarray*}
So,
\[ T^{k-1}_{\alpha} (0^-) = \beta(\beta^{k-3} + \cdots + \beta+1) \alpha + 1 + \frac1{\beta}  + \alpha-1 = T^{k-1}_{\alpha} (0^+) + \frac1{\beta}.\]
Lemma~\ref{l:1overbeta} tells us that $T_{\alpha}$ has matching after $k$ steps.
\end{proof}

For the remainder of this section, we assume that $T_\alpha$ has three branches, so $\alpha > \frac{1}{\beta^k}$.

\begin{lemma}\label{lem:codes}
For every $j \ge 0$ we have
\[ |T^j_{\alpha}(0^+) - T^j_{\alpha}(0^-)| \in \Big\{ \frac{e_1}{\beta} + \frac{e_2}{\beta^2} + \cdots + \frac{e_k}{\beta^k} \, : \, e_1, \dots, e_k \in \{0,1\} \Big\}.\]
\end{lemma}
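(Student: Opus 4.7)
The plan is to argue by induction on $j$, with the engine being the defining relation of the multinacci number
\[ 1 = \frac{1}{\beta} + \frac{1}{\beta^2} + \cdots + \frac{1}{\beta^k}, \]
which is equivalent to $\beta^k = \beta^{k-1}+\cdots+\beta+1$. The base case $j=0$ is immediate: $|0^+ - 0^-| = 1$, so one takes $e_1 = \cdots = e_k = 1$.

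For the inductive step, set $d_j = T^j_\alpha(0^-) - T^j_\alpha(0^+)$. Since $T_\alpha$ has three branches (each of slope $\beta$), there exist integers $c_\pm \in \{0,1,2\}$ with $T^{j+1}_\alpha(0^\pm) = \beta T^j_\alpha(0^\pm) + \alpha - c_\pm$, so
\[ d_{j+1} = \beta d_j + m, \qquad m := c_+ - c_- \in \Z, \]
and the fact that both iterates lie in $[0,1)$ forces $|d_{j+1}|<1$, which restricts $m$ to a small list. Assume without loss of generality that $d_j \ge 0$ and write $|d_j| = \sum_{i=1}^k e_i/\beta^i$ by the induction hypothesis. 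Then
\[ \beta d_j = e_1 + \sum_{i=1}^{k-1} \frac{e_{i+1}}{\beta^i}, \]
and since $\sum_{i=1}^{k-1} 1/\beta^i = 1 - 1/\beta^k$, we have $\beta d_j \in [0,\,1-1/\beta^k]$ if $e_1=0$ and $\beta d_j \in [1,\,2-1/\beta^k]$ if $e_1=1$.

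A routine case analysis shows that in each of these two ranges, the constraint $|d_{j+1}|<1$ admits only two values of $m$, and in each admissible subcase $|d_{j+1}|$ equals either
\[ \sum_{i=1}^{k-1} \frac{e_{i+1}}{\beta^i} \qquad \text{or} \qquad 1 - \sum_{i=1}^{k-1} \frac{e_{i+1}}{\beta^i}. \]
The first expression is already of the required form (with trailing digit $0$); the second, rewritten via the multinacci identity, becomes $\sum_{i=1}^{k-1}(1-e_{i+1})/\beta^i + 1/\beta^k$, also of the required form. This closes the induction. I do not foresee a genuine obstacle, but the one point meriting care is that one does not need to identify which $m$ is geometrically realised at each step — the induction conclusion holds for every $m$ that is even admissible, so the algebraic bookkeeping suffices and the dynamics enters only through the bound $|d_{j+1}|<1$.
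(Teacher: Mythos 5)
Your argument is correct and is essentially the paper's own proof: both are inductions on $j$ in which the difference transforms as one of $\beta D_j$, $1-\beta D_j$, $\beta D_j-1$ or $2-\beta D_j$, and the complemented cases are recast in the required digit form via the multinacci identity $1=\sum_{i=1}^{k}\beta^{-i}$. The only difference is organisational: the paper indexes the four cases by the relative branches of $T_\alpha^j(0^+)$ and $T_\alpha^j(0^-)$ (information it then records in Remark~\ref{rem:codes} and reuses in Theorem~\ref{thm:tribonacci2}), whereas you index them by the admissible integer shifts $m$ subject to $|d_{j+1}|<1$ --- an equivalent bookkeeping.
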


\begin{proof}
The proof goes by induction. Let $D_j = |T_\alpha^j(0^+)-T_\alpha^j(0^-)|$. Clearly 
$D_0 = 1 = \sum_{n=1}^k \frac{1}{\beta^n}$.
Suppose now that $D_j = \sum_{n=1}^k \frac{e_n}{\beta^n}$ for $e_n \in \{0,1\}$.
There are three cases:
\begin{enumerate}
\item $T_\alpha^j(0^+)$ and $T_\alpha^j(0^-)$ belong to the same $\Delta(i)$, so $D_j < \frac1{\beta}$. This implies that $e_1=0$ and thus $D_{j+1} =\beta D_j = \sum_{n=2}^k \frac{e_n}{\beta^{n-1}}$ has the required form.
\item $T_\alpha^j(0^+)$ and $T_\alpha^j(0^-)$ belong to adjacent $\Delta(i)$'s. There are two cases:\\
If $e_1 = 0$, then $D_{j+1} = 1-\beta D_j = \sum_{n=2}^k \frac{1-e_n}{\beta^{n-1}} + \frac{1}{\beta^k}$ has the required form.\\
If $e_1 = 1$, then $D_{j+1} = \beta D_j-1 = \sum_{n=2}^k \frac{e_n}{\beta^{n-1}}$ has the required form.
\item $T_\alpha^j(0^+) \in \Delta(0)$ and $T_\alpha^j(0^-)\in \Delta(2)$ or vice versa. Then $D_j > \frac{1}{\beta}$, and since $\sum_{n=2}^k \frac{1}{\beta^n} = 1-\frac1{\beta} < \frac1{\beta}$ we must have $e_1 = 1$. 
But then $D_{j+1} = 2-\beta D_j = 1-\sum_{n=2}^k \frac{e_n}{\beta^{n-1}}
= \sum_{n=2}^k \frac{1-e_n}{\beta^n} + \frac{1}{\beta^k}$ has the required form.
\end{enumerate}
This concludes the induction and the proof.
\end{proof}

\begin{remark}\label{rem:codes}
Note also that in case (i) and (ii) with $e_1 = 1$ in the above proof, $T_\alpha^j(0^+)-T_\alpha^j(0^-)$ and $T_\alpha^{j+1}(0^+)-T_\alpha^{j+1}(0^-)$ have the same sign, whereas in case (ii) with $e_1 = 0$ and case (iii), $T_\alpha^j(0^+)-T_\alpha^j(0^-)$ and $T_\alpha^{j+1}(0^+)-T_\alpha^{j+1}(0^-)$ have the opposite sign. This knowledge will be used in the proof of Theorem~\ref{thm:tribonacci2}. 
\end{remark}

Now assume that $\beta$ is the tribonacci number, i.e., the Pisot number with minimal polynomial $\beta^3-\beta^2-\beta-1$. Figure~\ref{fig:tribonacci} illustrates Lemma~\ref{lem:codes} and Remark~\ref{rem:codes}. Given $\alpha$ and initial difference $|0^--0^+| = 1=\frac1{\beta}+ \frac1{\beta^2}+\frac1{\beta^3}$ (coded as $111$), the path through the diagram is uniquely determined by the orbit of $0^+$. If $100$ is reached, there is matching in the next step.

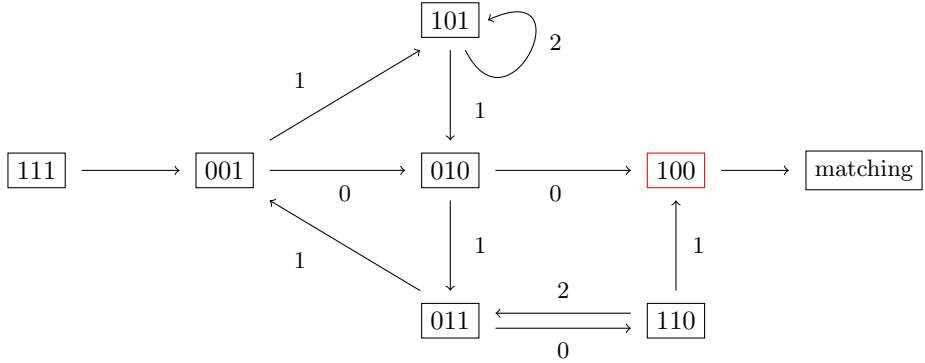
\begin{figure}[ht]
\begin{center}
\begin{tikzpicture}[scale=2]
\node[rectangle, draw=black] at (-.75,0) {111};
\node[rectangle, draw=black] at (.5,0) {001};
\node[rectangle, draw=black] at (2,0) {010};
\node[rectangle, draw=black] at (2,1) {101};
\node[rectangle, draw=black] at (2,-1) {011};
\node[rectangle, draw=black] at (3.5,-1) {110};
\node[rectangle, draw=red] at (3.5,0) {100};
\node[rectangle, draw=black] at (4.75,0) {\small matching};
\draw[->] (-.45,0)--(.2,0);
\draw[->] (.8,0)--(1.7,0);
\draw[->] (2.3,0)--(3.2,0);
\draw[->] (2,.8)--(2,.2);
\draw[->] (2,-.2)--(2,-.8);
\draw[->] (2.3,-1.05)--(3.2,-1.05);
\draw[->] (3.2,-.95)--(2.3,-.95);
\draw[->] (1.8,-.8)--(.8,-.2);
\draw[->] (.8,.2)--(1.8,.8);
\draw[->] (3.8,0)--(4.25,0);
\draw[->] (3.5,-.8)--(3.5,-.2);
\draw[->] (2.1,0.8) .. controls (2.4,.2) and (2.9,1.3) .. (2.25,1);
\node at (1,.6) {\small $1$};
\node at (1,-.6) {\small $1$};
\node at (1.3,-.15) {\small $0$};
\node at (2.7,-.15) {\small $0$};
\node at (2.7,.85) {\small $2$};
\node at (3.65,-.5) {\small $1$};
\node at (2.2,-.5) {\small $1$};
\node at (2.2,.4) {\small $1$};
\node at (2.75,-.8) {\small $2$};
\node at (2.75,-1.2) {\small $0$};
\end{tikzpicture}
\caption{The transition graph for the tribonacci number $\beta$
(root of $\beta^3 = \beta^2 + \beta + 1$).
The small numbers near the arrows indicate the difference in branch
between $T^n_\alpha(0^+)$ and $T^n_\alpha(0^-)$ when this arrow is taken.}
\label{fig:tribonacci}
\end{center}
\end{figure}

We will prove that the bifurcation set has $0<\dim_H(A_\beta) < 1$,
but first we indicate another matching interval.

\begin{prop}\label{p:notdense}
Let $\beta$ be the tribonacci number, so $\beta^3 = \beta^2 +\beta + 1$. If $\alpha \in \big[ \frac1{\beta}, \frac1{\beta^2}+\frac2{\beta^3}\big]$, then there is matching after four steps.
\end{prop}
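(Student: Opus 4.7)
The strategy mirrors the proof of Proposition~\ref{prop:two}: we track the orbits of $0^+$ and $0^-$ for three iterates, show that both stay in the middle cylinder $\Delta(1)$ at each step, and then apply Lemma~\ref{l:1overbeta} after verifying that the difference of the third iterates equals $1/\beta$.

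Throughout, we repeatedly use the identity $\beta^3 = \beta^2 + \beta + 1$, which divided by $\beta^3$ yields $\frac{1}{\beta} + \frac{1}{\beta^2} + \frac{1}{\beta^3} = 1$, and in particular $\beta - 2 = -\frac{1}{\beta^3}$. Since $\alpha \geq \frac{1}{\beta} > \frac{1}{\beta^3}$, the map $T_\alpha$ has three branches, so
$T_\alpha(0^+) = \alpha$ and $T_\alpha(0^-) = \beta + \alpha - 2 = \alpha - \frac{1}{\beta^3}$, with difference $T_\alpha(0^+) - T_\alpha(0^-) = \frac{1}{\beta^3}$. First I would verify that both iterates lie in $\Delta(1) = [\frac{1-\alpha}{\beta}, \frac{2-\alpha}{\beta})$: the lower condition $T_\alpha(0^-) \geq \frac{1-\alpha}{\beta}$ reduces to $\alpha \geq 1 - \frac{1}{\beta} = \frac{1}{\beta^2} + \frac{1}{\beta^3}$, which is implied by $\alpha \geq \frac{1}{\beta}$; the upper condition $T_\alpha(0^+) < \frac{2-\alpha}{\beta}$ reduces to $\alpha < \frac{2}{\beta+1}$, which is implied by $\alpha \leq \frac{1}{\beta^2} + \frac{2}{\beta^3}$.

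Because both orbits pass through the same branch $\Delta(1)$, the local map is $x \mapsto \beta x + \alpha - 1$ and differences multiply by $\beta$. Thus $T^2_\alpha(0^+) - T^2_\alpha(0^-) = \frac{1}{\beta^2}$, with $T^2_\alpha(0^+) = (\beta+1)\alpha - 1$. Next, I would verify that both $T^2_\alpha(0^\pm) \in \Delta(1)$. Using $\beta^3 \alpha = (\beta^2 + \beta + 1)\alpha$ to clear denominators, the condition $T^2_\alpha(0^-) \geq \frac{1-\alpha}{\beta}$ reduces exactly to $\alpha \geq \frac{1}{\beta}$, and the condition $T^2_\alpha(0^+) \leq \frac{2-\alpha}{\beta}$ reduces exactly to $\alpha \leq \frac{1}{\beta^2} + \frac{2}{\beta^3}$. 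These are precisely the two endpoints of the hypothesized interval, so both conditions hold on the entire range. Applying the same local affine action one more time gives
\[
T^3_\alpha(0^+) - T^3_\alpha(0^-) = \beta \cdot \frac{1}{\beta^2} = \frac{1}{\beta},
\]
and Lemma~\ref{l:1overbeta} then forces matching at iterate $4$.

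The main obstacle is the careful arithmetic required to show that the cylinder containments at iterate $2$ are equivalent to the two endpoint inequalities; the boundary parameters $\alpha = \frac{1}{\beta}$ and $\alpha = \frac{1}{\beta^2} + \frac{2}{\beta^3}$ are degenerate (one of $T^2_\alpha(0^\pm)$ lands exactly on the boundary of $\Delta(1)$, so $T^3_\alpha(0^+)$ hits a critical point of $T_\alpha$) and have to be checked by a direct computation, following the convention that the orbit of $0^+$ arriving at the discontinuity is continued via the branch from which it came. In each of these two cases one verifies by hand that $T^4_\alpha(0^+) = T^4_\alpha(0^-)$ (at the left endpoint both equal $\frac{1}{\beta}$, a fixed point of $T_\alpha$), so the closed interval statement is retained.
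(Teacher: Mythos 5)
Your proposal is correct and follows essentially the same route as the paper: both arguments verify by direct computation that $T_\alpha^j(0^\pm)\in\Delta(1)$ for $j=1,2$, so the initial difference $\frac{1}{\beta^3}$ is multiplied by $\beta$ twice to give $\frac{1}{\beta}$ at the third iterate, and Lemma~\ref{l:1overbeta} then yields matching at step four. The only cosmetic difference is that the paper packages the two endpoint inequalities as the condition that $T_\alpha(0^+)$ and $T_\alpha(0^-)$ lie on opposite sides of the fixed point $p(\alpha)=\frac{1-\alpha}{\beta-1}$, whereas you derive the same inequalities directly from the cylinder containments at the second iterate (and, like the paper, you dispose of the degenerate boundary parameters by a separate hand check).
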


\begin{proof}
Let $p(\alpha) = \frac{1-\alpha}{\beta-1}$ be the fixed point of the map $T_{\alpha}$. Note that  $T_{\alpha}(0^+)= \alpha > p(\alpha)$ if and only if $ \alpha > \frac1{\beta}$ and that $T_{\alpha}(0^-)= \beta + \alpha -2 < p(\alpha)$ if and only if $\alpha < \frac1{\beta^2} + \frac2{\beta^3}$. So if  $\alpha \in \big[ \frac1{\beta}, \frac1{\beta^2}+\frac2{\beta^3}\big]$, then the absolutely continuous invariant measure for $T_{\alpha}$ is not fully supported. One can check by direct computation that $T_{ \alpha}(0^+), T_{\alpha}(0^-), T^2_{\alpha}(0^+), T^2_{\alpha}(0^-) \in \Delta(1)$, which gives the following orbits:
\[ \begin{array}{ccccccc}
0^+ & \to &\alpha & \to & (\beta+1)\alpha -1 & \to & \beta^3 \alpha -\beta -1,\\
0^- & \to & \beta + \alpha -2 & \to & (\beta+1) \alpha -1-\frac1{\beta^2} & \to & \beta^3\alpha - \beta -1 -\frac1{\beta}.
\end{array}\]
Hence, there is matching after four steps. Also for $\alpha \in \big\{ \frac1{\beta}, \frac1{\beta^2}+\frac2{\beta^3}\big\}$ one can easily show that there is matching in four steps, since either $0^+$ or $0^-$ is mapped to the fixed point directly and the other one is mapped to 0 after three steps.
\end{proof}

\begin{theorem}\label{thm:tribonacci2}
If $\beta$ is the tribonacci number, 
then the bifurcation set $A_\beta$ has Hausdorff dimension strictly
between $0$ and $1$.
\end{theorem}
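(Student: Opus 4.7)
The plan is to leverage the transition graph of Figure~\ref{fig:tribonacci}, which fully encodes the combinatorics of non-matching orbits, and to mimic in this setting the symbolic-dynamics argument used for quadratic Pisot numbers in Section~\ref{sec:nonunit}. Each non-matching $\alpha$ corresponds to an infinite walk through the non-matching vertices $\{111, 001, 010, 101, 011, 110\}$ starting at $111$, and conversely every admissible length-$n$ walk $w$ determines a set $I(w)\subseteq [0,1]$ of parameters. By the derivative computation of Remark~\ref{rem:pw_affine}, $I(w)$ is a finite union of intervals of total length $\asymp \beta^{-n}$, and $A_\beta = \bigcap_n \bigsqcup_{|w|=n} I(w)$. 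The task is thus to count admissible walks and to control the lengths of the $I(w)$.

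For the upper bound $\dim_H(A_\beta) < 1$, let $M$ be the incidence matrix of the non-matching subgraph on the recurrent component $\{001, 010, 101, 011, 110\}$, where $M_{v,v'}$ counts the branches $\Delta(i)$ of $T_\alpha$ realising the transition $v\to v'$. The number of admissible walks of length $n$ grows like $\lambda_M^n$, where $\lambda_M$ is the Perron eigenvalue of $M$, so $A_\beta$ is covered by $O(\lambda_M^n)$ intervals of length $O(\beta^{-n})$, giving $\overline{\dim}_B(A_\beta) \leq \log \lambda_M/\log \beta$. The key inequality $\lambda_M < \beta$ will follow because both states $010$ and $110$ have (omitted) edges into the matching state $100$: these edges carry a strictly positive fraction of the branches at each visit, so the full branch-counting matrix, whose Perron root equals $\beta$, strictly dominates $M$.

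For the lower bound $\dim_H(A_\beta) > 0$, the non-matching graph contains two distinct cycles through the vertex $101$: the self-loop at $101$ of length $1$, and the cycle $101 \to 010 \to 011 \to 001 \to 101$ of length $4$. Free concatenation of these cycles embeds a full two-shift into the space of admissible walks, and the corresponding Cantor subset of $A_\beta$ consists of intervals $I(w)$ satisfying an open-set condition with contraction ratios $\beta^{-1}$ and $\beta^{-4}$. The mass distribution principle then yields $\dim_H(A_\beta) \geq s$, where $s > 0$ is the unique positive root of the Moran-type equation $\beta^{-s} + \beta^{-4s} = 1$; positivity is ensured since $\beta^{-1} + \beta^{-4} < 1$ for the tribonacci number $\beta \approx 1.839$.

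The main obstacle is the careful translation of the labelled-edge structure of Figure~\ref{fig:tribonacci} into the branch-multiplicity matrix $M$, together with a rigorous justification of $\lambda_M < \beta$. The figure's labels record \emph{branch differences} rather than branch counts, so for each edge one must enumerate the pairs $(i_+, i_-) \in \{0,1,2\}^2$ compatible with the given label and with the sign of $T_\alpha^j(0^+) - T_\alpha^j(0^-)$. Moreover, subtracting the matching edges from the full branch matrix must be shown to produce an honest spectral gap rather than merely deform the Perron eigenvector, which amounts to verifying that the matching edges carry positive weight inside an irreducible component of the graph. Once this finite combinatorial computation is executed, the length estimates on the cylinders $I(w)$ reduce, through the derivative computation of Remark~\ref{rem:pw_affine}, to the same mechanism as in the quadratic Pisot case.
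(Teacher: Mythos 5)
Your overall strategy (symbolic dynamics on the transition graph of Figure~\ref{fig:tribonacci}, a counting upper bound and an embedded-subshift lower bound) is in the right spirit, but both halves have genuine gaps, and the central quantitative claim of the upper bound is false. The full branch-counting matrix does \emph{not} have Perron root $\beta$: for a piecewise affine map of slope $\beta\approx 1.839$ with three branches, the number of branch-itineraries vastly overcounts the number of nonempty cylinders (whose growth rate is $\beta$, a metric fact, not a combinatorial one). Concretely, with your multiplicities the cycle $001\to 010\to 011\to 001$ alone carries weight $3\cdot 2\cdot 2=12$, so $\lambda_M\ge 12^{1/3}\approx 2.29>\beta$, and your covering bound gives $\overline{\dim}_B(A_\beta)\le \log\lambda_M/\log\beta>1$, which is vacuous. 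No amount of bookkeeping of the labelled edges repairs this, because the deficit is geometric: most combinatorially admissible branch-paths correspond to empty or very short cylinders. This is exactly why the paper's proof abandons path-counting and instead works with the survivor set $X_\alpha$ of the slope-$\beta$ skew product $\hat S_\alpha$ on the ten signed fibers: there $h_{top}(\hat S_\alpha)=\log\beta$ counts only nonempty cylinders, the strict inequality $h_{top}(\hat S_\alpha|X_\alpha)<\log\beta$ comes from uniqueness and full support of the measure of maximal entropy together with the Claim that $\bigcup_n S_\alpha^{-n}(J_\alpha)$ is dense, and uniformity over $\alpha$ comes from Raith's continuity theorem. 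Note also that the density Claim itself needs the restriction to $\alpha\in A=[\frac1{\beta^3},\frac1\beta]\cup[\frac1{\beta^2}+\frac2{\beta^3},1]$ (handled by Propositions~\ref{prop:two} and~\ref{p:notdense}), where the acip of $T_\alpha$ is fully supported; your proposal never isolates this case.

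The lower bound has a gap of the same nature. Free concatenation of the loop $101\to 101$ and the cycle $101\to 010\to 011\to 001\to 101$ is valid in the abstract graph, but the skew product is \emph{not} Markov with respect to the partition into states and branches (the slope is $\beta<2$ and the branch images do not align with the partition), so a path in the graph does not guarantee a nonempty cylinder, let alone one of length comparable to $\beta^{-n}$; the covering relations needed for your Moran equation $\beta^{-s}+\beta^{-4s}=1$ (the image of the self-loop region must contain both loop entry regions, and likewise for the $4$-cycle) are unverified and do not hold automatically. The paper circumvents this by building an explicit two-branch horseshoe from preimages $V_{-j'},V_{-j'-1}$ of a periodic point $p_0$ inside a small neighbourhood $W$ that each map \emph{onto} $W$ — again relying on density of preimages for $\alpha\in A$ — which yields the weaker but honest bound $\dim_H\ge \frac{\log 2}{(k+(j'+1)m)\log\beta}>0$. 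Finally, the transfer from phase space to parameter space, which you defer to ``the same mechanism as in the quadratic Pisot case,'' is not automatic here: in Section~\ref{sec:nonunit} the auxiliary circle map is full-branch, whereas here one must argue as in the paper via Remark~\ref{rem:pw_affine} that the phase-space cylinders move with bounded speed while $T_\alpha(0^+)$ moves with speed $1$, so that covers of $X_\alpha$ induce comparable covers of $A_\beta$.
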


\begin{proof}
We know from Propositions~\ref{prop:two} and \ref{p:notdense} that there is matching if $\alpha \in [0,\frac{1}{\beta^3}]$ and $\alpha \in [\frac{1}{\beta}, \frac{1}{\beta^2} + \frac2{\beta^3}]$. Therefore is suffices to consider $\alpha \in A := [\frac1{\beta^3}, \frac1{\beta}] \cup [\frac{1}{\beta^2} + \frac2{\beta^3}, 1]$.

Figure~\ref{fig:tribonacci} enables us to represent the phase space
containing the orbits $\{ T_\alpha^n(0^+), T_\alpha^n(0^-) \}_{n \in \N}$
 as $\cup_{e \in \F} I_e$, where the {\em fiber}
$\F = \{ \pm 001, \pm 010, \pm 011, \pm 101, \pm 110 \}$ consists of ten 
points, and
$$
I_e = \begin{cases}
\, [0, 1-\frac{e_1}{\beta} - \frac{e_2}{\beta^2} - \frac{e_3}{\beta^3}] \times \{ e \} 
&\text{ if } e = +e_1e_2e_3,   \\ 
\qquad [\frac{e_1}{\beta} + \frac{e_2}{\beta^2} + \frac{e_3}{\beta^3}, 1] \times \{ e \} &\text{ if } e = -e_1e_2e_3.
\end{cases} 
$$
(Taking these subintervals of $[0,1]$ allows both
$T_\alpha^n(0^+)$ and  $T_\alpha^n(0^-) = T_\alpha^n(0^+) \pm(\frac{e_1}{\beta} + \frac{e_2}{\beta^2} + \frac{e_3}{\beta^3})$
to belong to $[0,1]$ together.)
The dynamics on this space is a skew-product
$$
S_\alpha(x, \pm e_1e_2e_3) = \Big( T_\alpha(x), \phi_\alpha(\pm e_1e_2e_3) \Big),
\qquad \phi_\alpha: \begin{array}{llccc}
e & & 0 & 1 & 2 \\
\hline
\pm001 & \mapsto & \pm010 & \mp101 & \\
\pm010 & \mapsto & \pm100 & \mp011 & \\
\pm011 & \mapsto & \pm110 & \mp001 & \\
\pm101 & \mapsto & & \pm010 & \mp101  \\
\pm110 & \mapsto & & \pm100 & \mp011  
\end{array} 
$$
where the fiber map $\phi_\alpha$ is given as in Lemma~\ref{lem:codes}
and Remark~\ref{lem:codes}. 
However, see Figure~\ref{fig:skew}, there is a set $J_\alpha$ consisting of several intervals where $S$ is not defined because these sets lead to matching in two steps. For $\alpha < 1-\frac{1}{\beta}$, we have
\begin{align*}
J_\alpha = & \Big( \Big[\frac{1-\alpha}{\beta} + \frac{1}{\beta^2}, \frac{2-\alpha}{\beta}\Big)
\cup \Big[\frac{1}{\beta^2}, \frac{1-\alpha}{\beta}\Big) \Big) \times \{ -010\} \\
& \bigcup  \Big(\Big[0,\frac{1-\alpha}{\beta} - \frac{1}{\beta^2}\Big)
\cup \Big[\frac{1-\alpha}{\beta} , \frac{2-\alpha}{\beta} - \frac{1}{\beta^2}\Big) \Big) \times \{ +010\} \\
& \bigcup \Big[\frac{1}{\beta} + \frac{1}{\beta^2}, \frac{2-\alpha}{\beta}\Big) \times \{ -110 \} 
\bigcup \Big[0,  \frac{1-\alpha}{\beta} -\frac{1}{\beta^2}\Big) \times \{ +110 \},
\end{align*}
(depicted in Figure~\ref{fig:skew}), and similar values hold for $1-\frac{1}{\beta} \leq \alpha \leq 1-\frac{1}{\beta^2}$ and 
$1-\frac{1}{\beta^2} < \alpha$. We can artificially define $S$ also on $J_\alpha$ by making it affine there as well, with slope $\beta$. We denote the resulting map by $\hat S$.

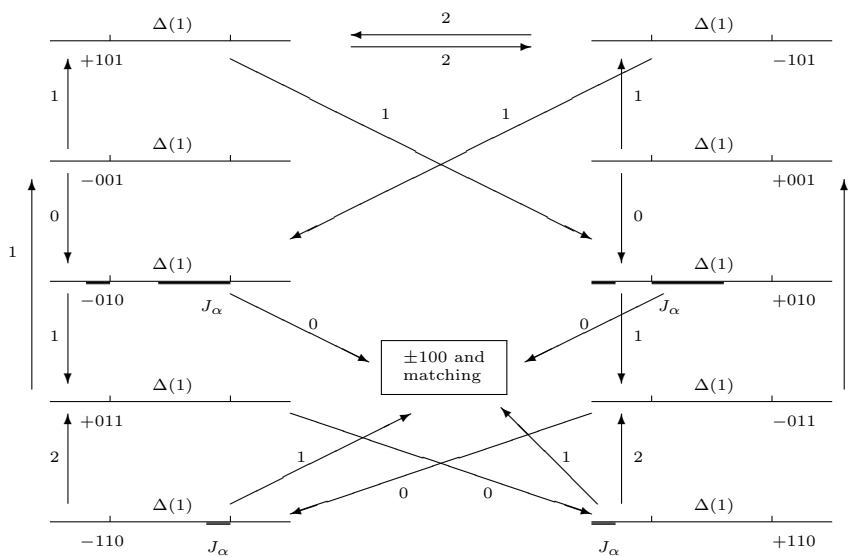
\begin{figure}[ht]
\begin{center}
\unitlength=8mm
\begin{picture}(12,10)(1.3,0) 
\put(1,1){\line(1,0){4}}\put(1.5, 0.6){\tiny$-110$} 
\put(4,0.98){\line(-1,0){0.4}}\put(4,0.96){\line(-1,0){0.4}}
\put(3.6, 0.5){\tiny$J_\alpha$}
\put(2,1){\line(0,1){0.1}}
\put(4,1){\line(0,1){0.1}}\put(2.7, 1.2){\tiny$\Delta(1)$}
\put(1.3,1.3){\vector(0,1){1.5}}\put(1, 2){\tiny$2$}
\put(5,2.8){\vector(3,-1){5}}\put(6.8, 1.4){\tiny$0$}
\put(10,1){\line(1,0){4}}\put(13, 0.6){\tiny$+110$}
\put(10,0.98){\line(1,0){0.4}}\put(10,0.96){\line(1,0){0.4}} 
\put(10.1, 0.5){\tiny$J_\alpha$}
\put(11,1){\line(0,1){0.1}}
\put(13,1){\line(0,1){0.1}}\put(11.7, 1.2){\tiny$\Delta(1)$} 
\put(10.5,1.3){\vector(0,1){1.5}}\put(10.7, 2){\tiny$2$}
\put(10,2.8){\vector(-3,-1){5}}\put(8.2, 1.4){\tiny$0$}
\put(1,3){\line(1,0){4}}\put(1.5, 2.6){\tiny$+011$} 
\put(2,3){\line(0,1){0.1}}
\put(4,3){\line(0,1){0.1}}\put(2.7, 3.2){\tiny$\Delta(1)$}
\put(0.7,3.2){\vector(0,1){3.5}}\put(0.3,5.4){\tiny$1$}
%
\put(10,3){\line(1,0){4}}\put(13, 2.6){\tiny$-011$} 
\put(11,3){\line(0,1){0.1}}
\put(13,3){\line(0,1){0.1}}\put(11.7, 3.2){\tiny$\Delta(1)$} 
\put(14.2,3.2){\vector(0,1){3.5}}\put(14.4,5.4){\tiny$1$}
\put(6.5,3.5){\tiny\fbox{$\begin{array}{l}\pm100\text{ and} \\
\text{matching} \end{array}$}}
\put(4,1.3){\vector(2,1){3}}\put(5.1,2){\tiny$1$}
\put(10.1,1.3){\vector(-1,1){1.6}}\put(9.5,2){\tiny$1$}
\put(4,4.8){\vector(2,-1){2.3}}\put(5.3, 4.2){\tiny$0$}
\put(11.2,4.8){\vector(-2,-1){2.3}}\put(9.8,4.2){\tiny$0$} 
\put(1,5){\line(1,0){4}}\put(1.5, 4.6){\tiny$-010$} 
\put(4,4.98){\line(-1,0){1.2}}\put(4,4.96){\line(-1,0){1.2}} 
\put(2,4.98){\line(-1,0){0.4}}\put(2,4.96){\line(-1,0){0.4}} 
\put(3.5, 4.5){\tiny$J_\alpha$}
\put(2,5){\line(0,1){0.1}}
\put(4,5){\line(0,1){0.1}}\put(2.7, 5.2){\tiny$\Delta(1)$}
\put(1.3,4.8){\vector(0,-1){1.5}}\put(1, 4){\tiny$1$}
\put(10,5){\line(1,0){4}}\put(13, 4.6){\tiny$+010$} 
\put(11,4.98){\line(1,0){1.2}}\put(11,4.96){\line(1,0){1.2}} 
\put(10,4.98){\line(1,0){0.4}}\put(10,4.96){\line(1,0){0.4}} 
\put(11.1, 4.5){\tiny$J_\alpha$}
\put(11,5){\line(0,1){0.1}}
\put(13,5){\line(0,1){0.1}}\put(11.7, 5.2){\tiny$\Delta(1)$} 
\put(10.5,4.8){\vector(0,-1){1.5}}\put(10.7, 4){\tiny$1$}
\put(1,7){\line(1,0){4}}\put(1.5, 6.6){\tiny$-001$} 
\put(2,7){\line(0,1){0.1}}
\put(4,7){\line(0,1){0.1}}\put(2.7, 7.2){\tiny$\Delta(1)$}
\put(1.3,7.2){\vector(0,1){1.5}}\put(1, 8){\tiny$1$}
\put(1.3,6.8){\vector(0,-1){1.5}}\put(1, 6){\tiny$0$}
\put(10,7){\line(1,0){4}}\put(13, 6.6){\tiny$+001$} 
\put(11,7){\line(0,1){0.1}}
\put(13,7){\line(0,1){0.1}}\put(11.7, 7.2){\tiny$\Delta(1)$}
\put(10.5,7.2){\vector(0,1){1.5}}\put(10.7, 8){\tiny$1$} 
\put(10.5,6.8){\vector(0,-1){1.5}}\put(10.7, 6){\tiny$0$}
\put(1,9){\line(1,0){4}}\put(1.5, 8.6){\tiny$+101$} 
\put(2,9){\line(0,1){0.1}}
\put(4,9){\line(0,1){0.1}}\put(2.7, 9.2){\tiny$\Delta(1)$}
\put(6,8.9){\vector(1,0){3}}\put(7.5, 8.6){\tiny$2$}
\put(4,8.7){\vector(2,-1){6}}\put(8.5, 7.7){\tiny$1$}
\put(10,9){\line(1,0){4}}\put(13, 8.6){\tiny$-101$} 
\put(11,9){\line(0,1){0.1}}
\put(13,9){\line(0,1){0.1}}\put(11.7, 9.2){\tiny$\Delta(1)$}
\put(9,9.1){\vector(-1,0){3}}\put(7.5, 9.3){\tiny$2$}
\put(11,8.7){\vector(-2,-1){6}}\put(6.5, 7.7){\tiny$1$}  
\end{picture}
\caption{The central part of Figure~\ref{fig:tribonacci}
represented as expanding map on ten intervals.
The small bold 
subintervals in the intervals encoded $\pm010$ and $\pm110$
are, for $\alpha < 1-\frac{1}{\beta}$, the regions leading to matching in two steps.}
\label{fig:skew}
\end{center}
\end{figure}

{\bf Claim:} For each $\alpha \in A$ the set $\cup_{n \geq 0} S_\alpha^{-n}(J_\alpha)$ is dense in $[0,1]$.

To prove the claim, first observe that the choice of $\alpha$ ensures 
that $T_\alpha$ has a  fully supported invariant density.
Therefore, by the Ergodic Theorem, the orbit of Lebesgue-a.e.\ point is dense in $[0,1]$.
In particular, for $p = \frac{1-\alpha}{\beta-1}$ the fixed point,
 $\cup_{n \geq 0} T_\alpha^{-n}(p)$ is dense.
This implies that $\cup_{n \geq 0} \hat S_\alpha^{-n}(\{ p \} \times \F)$
is dense in $\cup_{e \in\F} I_e$.
By inspection of Figure~\ref{fig:skew} we have that by only
using arrows labelled $0$ and $1$,
state $\pm 010$ cannot be avoided for more than four iterates.
Hence for every neighbourhood
$V \owns (p,e)$ for some $e \in \F$, there is an $n \in \N$ such that
$\hat S^n(V) \cap J_\alpha \neq \emptyset$.
This together proves the claim.
\\[3mm]
Let $X_\alpha := \{ x \in \cup_{e \in\F} I_e : S_\alpha^n(x) \notin J_\alpha\ \forall n \geq 0\}$.
It follows from \cite[Theorem 1 \& 2]{Rai92}
(extending results of \cite{MS80} and \cite{Urb87})
that $h_{top}(S_\alpha|X_\alpha)$ and $h_{top}(\hat S_\alpha|X_\alpha)$
depend continuously on $\alpha \in A$. 
In fact, \cite[Theorem 3]{Rai92} gives that
$\dim_H(X_\alpha)$ depends
continuously on $\alpha$ as well, and therefore
they reach their infimum and supremum for some $\alpha \in A$.

{\bf The upper bound:} To show that  $\sup_{\alpha \in A} \dim_H(X_\alpha) < 1$, note 
that, since $\hat S_\alpha$ a uniformly expanding with slope $\beta$, 
it is Lebesgue ergodic, with a unique measure of maximal 
entropy $\mu_{\max}(\alpha)$.
This measure is fully supported, and clearly $h_{top}(\hat S_\alpha) = \log \beta$.
Given that $\supp(\mu_{\max}(\alpha)) \neq X_\alpha$,
$h_{top}(\hat S_\alpha|X_\alpha) < \log \beta$ and the above mentioned
continuity implies that also 
$\sup_{\alpha \in A} h_{top}(\hat S_\alpha|X_\alpha) < \log \beta$.
The dimension formula (\cite{Bow79}) implies that 
$$
\sup_{\alpha \in A} \dim_H(X_\alpha) \leq
\sup_{\alpha \in A} \frac{h_{top}(\hat S_\alpha|X_\alpha)}
{\chi(\hat S_\alpha|X_\alpha)}
=\sup_{\alpha \in A} \frac{h_{top}(\hat S_\alpha|X_\alpha)}{\log \beta}
< 1.
$$
Here $\chi$ denotes the Lyapunov exponent, which has the common value
$\log \beta$ for every  $\hat S_\alpha$-invariant measure, 
because the slope is constant $\beta$.

If ${\mathcal V} = {\mathcal V}(\alpha) = \{ V_k \}_k$ is a cover of $X_\alpha$ consisting of closed disjoint intervals,
then we can shrink each $V_k$ so that $\partial V_k$ consists of 
preimages $w(\alpha)$ of $\Big(\!\{\frac{1-\alpha}{\beta}, \frac{2-\alpha}{\beta}\} \times \F\Big)
\cup\, \bigcup_{e \in \F} \partial I_e \cup \partial J_\alpha$.
By Remark~\ref{rem:pw_affine}, the intervals $V_k$ move to the left with speed
$\approx \frac{1}{\beta-1}$ as $\alpha$ moves in $A$.
At the same time, $S_\alpha(0^+ , \{ -001\})$ moves with speed $1$ to the right.
Therefore, to each $V_k = V_k(\alpha) \in {\mathcal V}(\alpha)$ corresponds a parameter interval $U_k$ of
length $\leq 2(1+\frac{1}{\beta-1}) |V_k|$ such that $T_\alpha(0^+) \in V_k(\alpha)$ 
if and only if $\alpha \in U_k$, and
the $\gamma$-dimensional Hausdorff mass satisfies $\sum_k |U_k|^\gamma \leq 
 2^\gamma(1+\frac{1}{\beta-1})^\gamma \sum_k |V_k|^\gamma$.
Furthermore, $A_\beta \subset \cup_k U_k$ for each cover $\mathcal V$,
and therefore $\dim_H(A_\alpha) \leq \sup_{\alpha \in A} \dim_H(X_\alpha) < 1$.

{\bf The lower bound:} To obtain a positive lower bound we find a Cantor subset $K(\alpha) \subset X_\alpha$ such that
$h_{top}(S_\alpha|K(\alpha))$ can be estimated. Let $e \in \F$ and $p_0 = p_0(\alpha) \in I_e$ be periodic under $S_\alpha$, say of period $m$. We think of $p_0$ as a lift of the fixed point $p$ of $T_\alpha$, but due to the transition between fibers, the period of $p_0$ is strictly larger than $1$. Take $\alpha_0$ such that $S^{n-1}_{\alpha_0}(\alpha_0, -001) = p_0$ for some $n \in \N$, and let $U \owns \alpha_0$ be a neighbourhood such that $g: \alpha \mapsto S_\alpha^{n-1}(\alpha,-001)$ maps $U$ to a fixed neighbourhood of $p_0$; more precisely, write $U = (\alpha_1, \alpha_2)$ and assume that there is $\eta > 0$ such that $g|U$ is monotone, $g(\alpha_1) = p_0(\alpha_1) - \eta \in I_e$ and  $g(\alpha_2) = p_0(\alpha_2) + \eta \in I_e$. Since $\frac{d}{d\alpha} g(\alpha) = \frac{\beta^n-1}{\beta-1}$ and $\frac{d}{d\alpha} p_0(\alpha) = \frac{-1}{\beta}$ by Remark~\ref{rem:pw_affine}, such $\eta > 0$ and interval $U$ can be found.

Since $\cup_{j \geq 0} S_\alpha^{-j}(p_0(\alpha))$ is dense, we can find $q_0 \in I_e \setminus \{ p_0\}$ such that $S_\alpha^k(q_0) = p_0$ for some $k \geq 1$. Moreover, the preimages $q_{-j} = I_e \cap S_\alpha^{-m}(q_{1-j})$, $j \geq 1$, converge exponentially fast to $p_0$ as $j \to \infty$. Let $V_0$ be a neighbourhood of $q_0$ such that $S_\alpha^k$ maps $V_0$ diffeomorphically onto a neighbourhood $W \owns p_0$; say $W \subset B_\eta(p_0)$. Let $V_{-j} = I_e \cap S_\alpha^{-m}(q_{1-j})$ and find $j'$ so large that $V_{-j'}$ and $V_{-j'-1} \subset W$. Then there is a maximal Cantor set $K' \subset V_{-j'} \cup V_{-j'-1}$ such that $S_\alpha^{k+j'm}(V_{-j'} \cap K') = S_\alpha^{k+(j'+1)m}(V_{-j'-1} \cap K')= K'$. Next let $K = \cup_i S_\alpha^i(K')$. Then $K = K(\alpha)$ is $S_\alpha$-invariant and since $K'$ is in fact a full horseshoe of two branches with periods $k+j'm$ and $k+(j'+1)m$, $h_{top}(S_\alpha|K) \geq \frac{\log 2}{k+(j'+1)m} > 0$. All these sets vary as $\alpha$ varies in $U$, but since the slope is constant $\beta$, the dimension formula gives 
$\dim_H(K(\alpha)) \geq \frac{\log 2}{(k+(j'+1)m) \log \beta}$, uniformly in $\alpha \in U$.

The set $K(\alpha)$ moves slowly as $\alpha$ moves in $A$; as in Remark~\ref{rem:pw_affine}, $\frac{d}{d\alpha} y(\alpha)$ is bounded when $y(\alpha) \in K(\alpha)$ is the continuation of $y \in K(\alpha_0)$. At the same time, $g$ is affine on $U$ and has slope $\frac{d}{d\alpha} T^{n-1}_\alpha(\alpha) \equiv \frac{\beta^n-1}{\beta-1}$. Therefore, $U$ contains a Cantor set of dimension $\ge \frac{\log 2}{(k+(j'+1)m)\log \beta}$, and this proves the lower bound.
\end{proof}

\begin{figure}[ht]
\begin{center}
\begin{tikzpicture}[scale=2]
\node[rectangle, draw=black] at (-1,1) {1111};
\node[rectangle, draw=black] at (0,0) {0001};
\node[rectangle, draw=black] at (1,0) {0100};
\node[rectangle, draw=red] at (2,0) {1000};
\node[rectangle, draw=black] at (3,0) {1100};
\node[rectangle, draw=black] at (2,1) {0111};
\node[rectangle, draw=black] at (3,1) {1110};
\node[rectangle, draw=black] at (-1,-1) {1101};
\node[rectangle, draw=black] at (0,-1) {0101};
\node[rectangle, draw=black] at (1,-1) {0010};
\node[rectangle, draw=black] at (3,-1) {0110};
\node[rectangle, draw=black] at (0,-2) {1010};
\node[rectangle, draw=black] at (1,-2) {1011};
\node[rectangle, draw=black] at (2,-2) {1001};
\node[rectangle, draw=black] at (3,-2) {0011};
\node[rectangle, draw=black] at (2,-.75) {\small matching};
\draw[->] (-1,.8)--(-.3,.2);
\draw[->] (.3,-.2)--(.7,-.8);
\node at (.45,-.2) {0};
\draw[->] (-.3,-.2)--(-1,-.8);
\node at (-.65,-.3) {1};
\draw[->] (2,-.2)--(2,-.55);

\draw[->] (-.7,-1)--(-.3,-1);
\node at (-.5,-.85) {2};
\draw[->] (-.7,-1.2)--(-.3,-1.8);
\node at (-.65,-1.7) {1};

\draw[->](0,-1.2)--(0,-1.8);
\node at (.15,-1.5) {0};
\draw[->](.3,-2)--(.7,-2);
\node at (.5,-2.15) {2};
\draw[->](1.3,-2)--(1.7,-2);
\node at (1.5,-2.15) {2};
\draw[->] (2.7,-2)--(2.3,-2);
\node at (2.5,-2.15) {1};
\draw[->](1,-.8)--(1,-.2);
\node at (1.15,-.5) {0};

\draw[->](1,-1.2)--(1,-1.8);
\node at (1.15,-1.5) {1};
\draw[->](2.9,-1.2)--(2.9,-1.8);
\node at (2.75,-1.5) {1};
\draw[->](3.1,-1.8)--(3.1,-1.2);
\node at (3.25,-1.5) {0};
\draw[->](3,-.8)--(3,-.2);
\node at (3.15,-.5) {0};
\draw[->](3,.8)--(3,.2);
\node at (3.15,.5) {1};

\draw[->](1.3,0)--(1.7,0);
\node at (1.5,-.15) {0};
\draw[->](2.7,0)--(2.3,0);
\node at (2.5,-.15) {1};
\draw[->](2.3,1)--(2.7,1);
\node at (2.5,1.15) {0};

\draw[->] (2.7,.2)--(2.3,.8);
\node at (2.6,.6) {2};
\draw[->] (1,.2)--(1.7,.8);
\node at (1.25,.6) {1};
\draw[->](2,-1.8)--(1.3,-1.2);
\node at (1.75,-1.4) {1};

\draw[->](1.7,1)--(0,1)--(0,.2);
\node at (1,1.15) {1};
\draw[->](3.3,1)--(3.5,1)--(3.5,-2)--(3.3,-2);
\node at (3.65,-.5) {2};
\draw[->](2,-2.2)--(2,-2.4)--(-1,-2.4)--(-1,-1.2);
\node at (-1.15,-1.7) {2};

\draw[->] (.1,-.8) .. controls (.4,-.2) and (-.4,-.2) .. (-.1,-.8);
\node at (0,-.5) {1};
\draw[->] (.3,-1.8)--(.7,-.2);
\node at (.5,-1.4) {1};
\draw[->](1.3,-1.8)--(2.7,-1.2);
\node at (2.3,-1.55) {1};
\end{tikzpicture}
\caption{The transition graph for the Pisot number $\beta$ that is the root of $\beta^4 = \beta^3 + \beta^2 + \beta + 1$.
The small numbers near the arrows indicate the difference in branch between $T^n_\alpha(0)$ and $T^n_\alpha(1)$ when this arrow needs to be taken.}
\label{fig:4bonacci}
\end{center}
\end{figure}
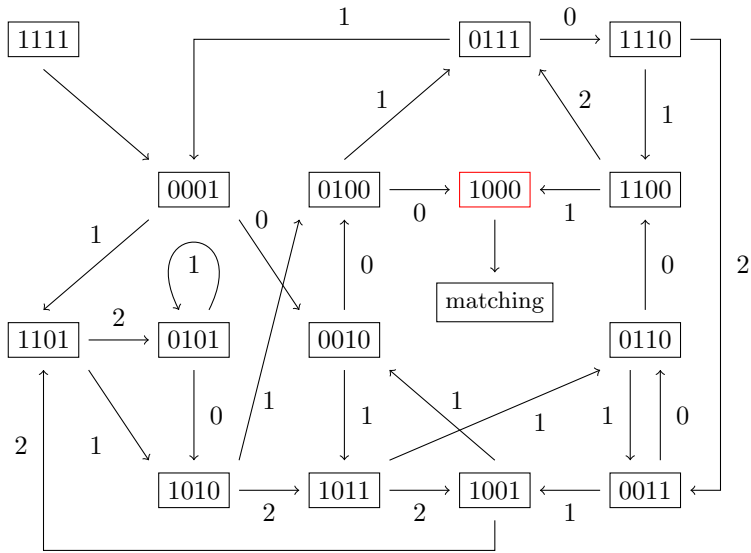

As the degree of the multinacci number becomes higher, the transition graph becomes more complicated. Figure~\ref{fig:4bonacci} shows the graph corresponding to the Pisot number with minimal polynomial $\beta^4 - \beta^3-\beta^2-\beta-1$. We conjecture that there is a.s.~matching for all multinacci numbers $\beta$, but it is not obvious how the technique from Theorem~\ref{thm:tribonacci2} can be extended to cover the general case.

\section{Final remarks}
We make one final remark about the presence of matching for Pisot and non-Pisot $\beta$'s. The techniques used in this article are based on the fact that the difference between $T^n_\alpha(0^+)$ and $T^n_\alpha(0^-)$ can only take finitely many values. Note that $|T^n_\alpha(0^+)-T^n_\alpha(0^-)| = \sum_{i=0}^n c_i \beta^i$ for some $c_i \in \{0,\pm 1, \ldots, \pm \lceil \beta \rceil \}$, see (\ref{q:0orbit}). The Garsia Separation Lemma (\cite{Gar62}) implies that for any Pisot number $\beta$, the set
\[ Y(\beta) = \big\{ \sum_{i=0}^n c_i \beta^i \, : \, c_i \in \{ 0, \pm 1, \ldots, \pm \lceil \beta \rceil\}, \, n = 0,1,2,\ldots \big\}\]
is uniformly discrete, i.e., there is an $r>0$ such that $|x-y|>r$ for each $x\neq y \in Y(\beta)$ (see for example \cite{AK13}). In particular, $Y(\beta)\cap [0,1]$ is finite. Hence, for any Pisot number $\beta$ the set of states in the corresponding transition graph will be finite. We suspect that a substantial number of actual paths through this graph will lead to the matching state, which leads us to the following conjecture.

\begin{conj}
If $\beta$ is a non-quadratic Pisot number, then $0 < \dim_H(A_\beta) <1$.
\end{conj}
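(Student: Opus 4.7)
The plan is to generalise the tribonacci argument of Theorem~\ref{thm:tribonacci2} by exploiting the finite fibre provided by the Garsia Separation Lemma that is invoked in the preceding paragraph. First I would set $\F := Y(\beta) \cap (-1,1)$, which is the finite set of possible values of $T^n_\alpha(0^+) - T^n_\alpha(0^-)$, and build a skew-product $S_\alpha : \bigcup_{e \in \F} I_e \to \bigcup_{e \in \F} I_e$ in which $I_e$ records the positions of $T^n_\alpha(0^+)$ compatible with difference $e$, and the fibre map $\phi_\alpha$ generalises the transition rules of Lemma~\ref{lem:codes}. Those $(x,e)$ which get sent in one step to a difference of the form $j/\beta$ form a matching region $J_\alpha$; extending $S_\alpha$ affinely over $J_\alpha$ produces a piecewise expanding map $\hat S_\alpha$ of slope $\beta$, and I set $X_\alpha := \{x : \hat S_\alpha^n(x) \notin J_\alpha \text{ for all } n \geq 0\}$. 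Finally, let $A \subset [0,1]$ be an open parameter set on which $T_\alpha$ has at least three branches and none of the first one or two iterates matches trivially.

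For the upper bound, I would argue that for every $\alpha \in A$ the set $\bigcup_{n \geq 0} \hat S_\alpha^{-n}(J_\alpha)$ is dense in $\bigcup_{e \in \F} I_e$, which forces $X_\alpha$ to be a proper closed $\hat S_\alpha$-invariant subsystem. As in Theorem~\ref{thm:tribonacci2}, density would follow from Lebesgue-ergodicity of $T_\alpha$ combined with the graph-theoretic fact that matching is reachable, inside the transition graph on $\F$, from every fibre state. The continuity results of \cite{Rai92} then guarantee that $h_{top}(\hat S_\alpha|_{X_\alpha})$ is continuous in $\alpha \in A$, hence $\sup_{\alpha \in A} h_{top}(\hat S_\alpha|_{X_\alpha}) < \log \beta$. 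Bowen's formula $\dim_H = h_{top}/\log\beta$ gives $\sup_{\alpha \in A}\dim_H(X_\alpha) < 1$, and the parameter-to-orbit argument of Remark~\ref{rem:pw_affine} (which for each element $V_k$ of a cover of $X_\alpha$ produces a parameter interval of length $\asymp |V_k|$) transfers the inequality to $\dim_H(A_\beta) < 1$.

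For the lower bound, I would build a horseshoe inside the non-matching dynamics, copying the corresponding part of the proof of Theorem~\ref{thm:tribonacci2}. Since $\beta$ has degree $\geq 3$, the transition graph on $\F$ admits at least two distinct cycles disjoint from $J_\alpha$ (for instance, the fixed-point lift $p_0$ plus any additional periodic code in a fibre not adjacent to $J_\alpha$, the latter being forced to exist by the size of $\F$). Taking a periodic point $p_0 \in I_e$ of $S_\alpha$ and a second periodic preimage $q_0$ with return times $k+j'm$ and $k+(j'+1)m$, I would produce an $S_\alpha$-invariant Cantor set $K(\alpha) \subset X_\alpha$ with $h_{top}(S_\alpha|_{K(\alpha)}) \geq \tfrac{\log 2}{k+(j'+1)m} > 0$, uniformly for $\alpha$ in some neighbourhood $U \subset A$. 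Bowen's formula yields $\dim_H(K(\alpha))>0$, and the transfer principle (using $\tfrac{d}{d\alpha} S_\alpha^{n-1}(\alpha,e_0) = \tfrac{\beta^n-1}{\beta-1}$ from Remark~\ref{rem:pw_affine}) embeds a Cantor set of the same dimension into $A_\beta$.

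The main obstacle is the graph-theoretic reachability statement underlying the upper bound: that in the transition graph on $\F$ every state admits a directed path into $J_\alpha$. In the tribonacci case this is verified by visual inspection of Figure~\ref{fig:tribonacci}; in general one needs a structural argument valid for all non-quadratic Pisot $\beta$. I expect the natural framework to be the eventually periodic $\beta$-expansion of $1$ and of the elements of $Y(\beta) \cap [0,1]$, combined with a pigeonhole argument showing that any infinite path staying in the non-matching part of the graph would yield an algebraic identity incompatible with the Pisot property of $\beta$. Making this precise uniformly in $\beta$, and dovetailing it with the quantitative continuity required by \cite{Rai92}, is where the real difficulty lies.
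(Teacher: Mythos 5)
This statement is not proved in the paper at all: it appears in the final section as an open \emph{conjecture}, and the authors explicitly say (after Figure~\ref{fig:4bonacci}) that it is not obvious how the technique of Theorem~\ref{thm:tribonacci2} extends beyond the tribonacci case. So there is no ``paper proof'' to match your argument against, and your proposal should be judged as an attempted proof of an open problem. As such it contains a genuine gap, and in fact you have located it yourself: the entire upper bound hinges on the claim that, for every non-quadratic Pisot $\beta$ and every $\alpha$ in the relevant parameter window, the matching region $J_\alpha$ is reachable from every state of the finite transition graph on $\F = Y(\beta)\cap(-1,1)$ (equivalently, that $\bigcup_n \hat S_\alpha^{-n}(J_\alpha)$ is dense, so that $X_\alpha$ is a proper subsystem with $h_{top}(\hat S_\alpha|_{X_\alpha}) < \log\beta$). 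In the tribonacci case this is verified by inspection of a ten-state graph, using the specific fact that the state $\pm 010$ cannot be avoided for more than four iterates along arrows labelled $0$ and $1$. Your proposed replacement --- a pigeonhole argument on $\beta$-expansions showing that an infinite path avoiding matching would contradict the Pisot property --- is exactly the missing structural input, and you give no argument for it; this is precisely why the statement is a conjecture rather than a theorem.

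The lower bound half of your sketch has a second, independent gap. You assert that ``the transition graph on $\F$ admits at least two distinct cycles disjoint from $J_\alpha$ \dots forced to exist by the size of $\F$.'' Cardinality of $\F$ alone forces nothing of the sort: a priori the non-matching part of the graph could fail to contain any cycle realisable by the skew product for a given $\alpha$ (in which case $A_\beta$ could even be empty or countable, giving $\dim_H(A_\beta)=0$), or the realisable cycles could fail to support a two-branch horseshoe robust under perturbation of $\alpha$. In the tribonacci proof the horseshoe is built from the lift $p_0$ of the \emph{actual fixed point} $p = \frac{1-\alpha}{\beta-1}$ of $T_\alpha$ together with density of its preimages, which uses the fully supported invariant density on the chosen parameter set $A$; transporting that construction to an arbitrary non-quadratic Pisot $\beta$ again requires knowing which fibre states the orbit of $p$ can occupy and that these avoid $J_\alpha$ indefinitely --- the same unresolved combinatorial question as in the upper bound. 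So your proposal is a sensible programme that faithfully mirrors the architecture of Theorem~\ref{thm:tribonacci2} (skew product over a finite fibre, Raith continuity, Bowen's formula, the parameter-transfer via Remark~\ref{rem:pw_affine}), but it does not close either of the two gaps that separate the tribonacci theorem from the general conjecture.
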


Salem numbers require further techniques to decide on prevalent matching. For non-Salem, non-Pisot algebraic numbers finding any matching seems unlikely.

\medskip
{\bf Acknowledgement:}
The authors would like to thank Christiane Frougny and the referee, whose remarks spurred us on to sharper results, which led to Theorem~\ref{t:quadratic} in its present form. The second author is partially supported by the GNAMPA group of the ``Istituto Nazionale di Alta Matematica'' (INdAM). The third author is supported by the NWO Veni-grant 639.031.140.

\end{document}